\numberwithin{equation}{section}
\newtheorem{theorem}{Theorem}[section]
\newtheorem{lemma}[theorem]{Lemma}
\newtheorem{proposition}[theorem]{Proposition}
\newtheorem{corollary}[theorem]{Corollary}
\newtheorem{question}[theorem]{Question}
\newtheorem{definition}[theorem]{Definition}
\newtheorem{remark}[theorem]{Remark}
\newtheorem{example}[theorem]{Example}
\newcommand{\dn}{\mathord{\downarrow}\hspace{0.05em}}
\newcommand{\up}{\mathord{\uparrow}\hspace{0.05em}}
\newcommand{\uuar}{\mathord{\Downarrow}\hspace{0.05em}}
\newcommand\blfootnote[1]{%
\begingroup
\renewcommand\thefootnote{}\footnote{#1}%
\addtocounter{footnote}{-1}%
\endgroup
}
\journal{Topology and its Applications}
\begin{document}

\begin{frontmatter}



\title{When do weakly first-countable spaces and the Scott topology of open set lattice become sober?}


\author{Zhengmao He, Bin Zhao$^{\ast}$}
\address{School of Mathematics and Statistics, Shaanxi Normal University,Xi'an 710119, China}
\begin{abstract}
In this paper, we investigate the sobriety of weakly first-countable spaces and give some sufficient conditions that the Scott topologies of the open set lattices are sober. The main results are:

(1) Let $P$ and $Q$ be two posets. If $\Sigma P\times \Sigma Q$ is a Fr\'{e}chet space, then $\Sigma (P\times Q)=\Sigma P \times \Sigma Q$.

(2) For every $\omega$-well-filtered coherent $d$-space $X$, if $X\times X$ is a Fr\'{e}chet space,
 then $X$ is sober;

(3) For every $\omega$ type P-space or consonant Wilker space $X$, $\Sigma\mathcal{O}(X)$ is sober.
\end{abstract}
\begin{keyword} Fr\'{e}chet space; Scott topology; well-filtered space; sober space\\
\vspace*{0.2cm}
{\em Mathematics Subject Classification:} 54B20; 06B35; 06F30
\end{keyword}


\end{frontmatter}
\blfootnote{*Corresponding author}
\blfootnote{This work is supported by the National Natural Science Foundation of China (No.12331016).}
\blfootnote{E-mail address: hezhengmaomath@163.com (Z.M. He), zhaobin@snnu.edu.cn (B. Zhao).}


\section{Introduction}
\label{}

\quad Sobriety is probably the most important and useful property of non-Hausdorff topological spaces. It plays an important role in Stone duality (see \cite{MS36}), the Hofmann-Mislove Theorem (see \cite{GG03}), representation of a closed lattice (see \cite{IWJ81}). A crucial topic of Domain theory form topological viewpoint mentioned in \cite{GG03} is to characterize the sobriety of Scott topological spaces. There are two routes for studying this issue. One is to give general conclusions form the positive aspect. A well known conclusion is every continuous dcpo or quasicontinuous dcpo is Scott sober (see \cite{GG03,JG13}). Particularly, if the sup operation of a sup semilattice dcpo is jointly continuous, the Scott topology is still sober. In addition, Miao, Xi, Li and Zhao proved that a complete lattice having countably incremental ideals is Scott sober(see \cite{AB75}). Under some topological requirements, there are a series of sufficient conditions. A core-compact (or first-countable) well-filtered dcpo is Scott sober(see \cite{DZ15,TH62}). Furthermore, Xu prove that if a $\ell f_{\omega}$-poset $P$ is Lawson compact (or having property $R$), then $\Sigma P$ is sober (see \cite{ME182}). Another route is to construct counterexamples. Johnstone first constructed a dcpo $\mathbb{J}$ whose Scott topology is non-sober (see \cite{PJ81}). Later, a Scott non-sober complete lattice is given by Isbell( see \cite{IJ82}). In \cite{RTT45}, A. Jung and S. Abramsky ask whether every distributive complete lattice is Scott sober. Xu, Xi and Zhao in \cite{XXZ20} prove that for Isbell's complete lattice $L$, the Scott space of the frame $\mathcal{Q}(L)$ is non sober, where $\mathcal{Q}(L)$ is the poset of all nonempty compact saturated subsets of $\Sigma L$ with inverse inclusion order. A further question asked by A. Jung is whether every countable complete lattice is Scott sober. Miao, Xi, Li and Zhao construct a countable Scott non-sober distributive complete lattice (see \cite{AB75}). In this paper, we will consider the sobriety of weakly first-countable spaces and the
Scott topologies of the open set lattices.

\quad Given two posets $P$ and $Q$, it is not difficult to check that the product topology of $\Sigma P$ and $\Sigma Q$ is contained in the Scott topology of $P\times Q$. There are serval sufficient conditions which leads to the equality of the product topology $\Sigma P\times\Sigma Q$ and the Scott topology of $P\times Q$. If $\Sigma P$ and $\Sigma Q$ are core-compact, first-countable or the all incremental ideals $|Id(P)|$ and $|Id(Q)|$ are countable, then the product topology of $\Sigma P$ and $\Sigma Q$ agrees with the Scott topology of $P\times Q$ (see \cite{AB75}). There are serval weakly first-countable spaces, such as Fr\'{e}chet spaces. In this paper, we proved that if $\Sigma P\times \Sigma Q$ is a Fr\'{e}chet space, then the product topology of $\Sigma P$ and $\Sigma Q$ agrees with the Scott topology of $P\times Q$. It is known in \cite{TH62} that every first-countable $\omega$-well filtered $d$-space is sober. Using Fr\'{e}chet spaces, we show that for every $\omega$-well-filtered coherent $d$-space, if $X\times X$ is a Fr\'{e}chet space, then $X$ is sober.

\quad In \cite{XXZ20}, Xu, Xi and Zhao give a first Scott non-sober frame which is isomorphic to the open set lattice of a topological space. This prompts us to consider the sobriety of the open set lattice endowed with Scott topology. Specifically, we proved that for $\omega$ P-spaces or consonant Wilker spaces, the open set lattices are sober for Scott topology.

\section{Preliminaries}
\label{}

Let $P$ be a poset and $A\subseteq P$. We set $$\up A=\{x\in P\mid \exists\ a\in A, a\leq x \}$$ and $$\dn A=\{x\in P\mid \exists\ a\in A, x\leq a\}.$$
For every $x\in P$, we write $\dn x$ for $\dn\{x\}$  and $\up x$ for $\up\{x\}$. A subset $D\subseteq P$ is directed if for a pair $a,b\in D$, $\up a\cap \up b \cap D\neq\emptyset$.

\quad   Given a poset $P$ and $x, y\in P$. We call that $x$ is {\em way below} $y$, in symbols $x\ll y$, iff for every directed subsets $D\subseteq P$ for which $\bigvee D$ exists, $\bigvee D\in\up y$ implies $D\cap\up x\neq\emptyset$.\ If $\uuar a=\{b\in P\mid b\ll a\}$ is  directed  and $\bigvee \uuar a=a$\ for all \ $a\in P$, we call $P$  a continuous poset.

\quad A poset $P$ is called a {\em directed complete poset} ({\em  dcpo}, for short) if every
directed subset of the poset has a supremum. A dcpo $P$ is called domain if $P$ is continuous.

\quad Let $P$ be a poset. A subset $U$ of $P$ is called {\em Scott open} (see \cite{GG03})if

 (i) $U=\up U$

 (ii) for every directed subset $D$, $\bigvee D\in U$ implies
$D\cap U\neq \emptyset$, whenever $\bigvee D$ exists.\\
All Scott open sets on $P$ form the Scott topology $\sigma(P)$. We write $(P,\sigma(P))$ as $\Sigma(P)$.

\quad Let $X$ be a topological space. A nonempty subset $F\subseteq X$ is  {\em irreducible},
if for a pair closed sets $A$, $B\subseteq X$, $F\subseteq A\cup B$ implies $F\subseteq A$ or $F\subseteq B$. A $T_{0}$ space $X$ is called to be {\em sober} if every irreducible closed set $A$ equals to $\overline{\{x\}}$ for some $x\in X$.

\quad For a $T_{0}$ space $X$, the {\em specialization order} $\leq$ on $X$ is defined by
$$x\leq y \Longleftrightarrow x\in cl(\{y\}).$$ The poset of a topological space $X$ with the specialization order is denoted by $\Omega(X)$.

\quad A subset $A$ of a topological space $X$ is {\em saturated} if $A=\up A$ in $\Omega(X)$. A topological space $X$ is {\em well-filtered} ({\em $\omega$-well-filtered}) if for each filtered (countably filtered) family $\mathcal{F}$ of compact saturated subsets of $X$ and each open set $U$ of $X$, $\bigcap \mathcal{F}\subseteq U$ implies $F\subseteq U$ for some $F\in\mathcal{F}$.  It is well-known that
every sober space is well-filtered (see \cite{GG03}).

\quad Given a topological space $X$, we use $\mathcal{O}(X)$ ($\Gamma(X)$) to denote the lattice of all open (closed, respectively) subsets of $X$ with inclusion order.

\quad A topological space $X$ is {\em coherent} if for a pair $K,Q\in\mathcal{Q}(X)$, $K\cap Q\in\mathcal{Q}(X)$. A topological space $X$ is said to be {\em locally compact} if for any $U\in \mathcal{O}(X)$ and for any $x\in U$, there is an open set $V$ and a compact subset $K$ such that $x\in V\subseteq K\subseteq U$. A topological space $X$ is said to be  {\em core-compact } if  $\mathcal{O}(X)$ is continuous. It has been known that every locally compact space is core-compact.

\quad A topological space $X$ is a {\em retract} of a topological space $Y$ if there are two continuous maps $f:X\longrightarrow Y$ and $g:Y\longrightarrow X$ such that $g\circ f=id_{X}$.

\quad Given a set $X$, we use $X^{<\omega}$ to denote the collection of finite subsets of $X$.

\section{ Fr\'{e}chet spaces and sober spaces}\label{sec:fm}

In this section, using Fr\'{e}chet spaces, we will give a sufficient condition such that the product topology of $\Sigma P$ and $\Sigma Q$ and the Scott topology of $P\times Q$ coincides for two posets $P$ and $Q$. As a corollary, for every bounded complete dcpo $P$, if $\Sigma P\times \Sigma P$ is a Fr\'{e}chet space, then $\Sigma P$ is sober. Meanwhile, we extend this conclusion to general $T_{0}$ space. Concretely, if $X$ is an $\omega$-well-filtered coherent $d$-space and $X\times X$ is a Fr\'{e}chet space, then $X$ is sober.

\subsection{The applications of Fr\'{e}chet spaces to the product of Scott topologies}

\vspace*{0.5cm}

\begin{definition}{\rm(see \cite{RE77})} A topological space $X$ is a Fr\'{e}chet space if for every subset $A$, $x\in \overline{A}$ implies there is a sequence $\{x_{n}\}_{n\in\omega}$ contained in $A$ converging to $x$.
\end{definition}

It is known that every first-countable space is a Fr\'{e}chet space(see \cite{RE77}). The following example illustrates the Fr\'{e}chet space is not first-countable in general.

\begin{example}{\rm Let $X=\mathbb{R}$ be the real line and $Y=\{y_{0}\}\cup (X\setminus\mathbb{N})$, where $\mathbb{N}$ is collection of all positive natural numbers and $y_{0}\not\in X$. Define a map $f: X\longrightarrow Y$ by
$$\forall \ x\in X, \ f(x)=\left\{
             \begin{array}{ll}
              x, &\ \ x\in X\setminus\mathbb{N}, \\
              y_{0}, &\ \ x\in\mathbb{N}.
             \end{array}
           \right.$$
The closed sets of space $Y$ is generated by the family $$\{A\subseteq Y\mid f^{-1}(A)\ \mbox{is closed in}\ X\}.$$ Then $Y$ is a Fr\'{e}chet space (see \cite{RE77} Example 1.6.18). However $Y$ has no countable bases at $y_{0}$. Hence, $Y$ is not first-countable (see \cite{RE77} Example 1.4.17 ).

}
\end{example}

Interestingly, J.Goubault-Larrecq find the following result in his blog which is due to Matthew de Brecht.

\begin{theorem} {\rm (see \cite{AU41})} Let $P$ and $Q$ be two posets. If $\Sigma P$ and $\Sigma Q$ are first-countable, then $\Sigma (P\times Q)=\Sigma P \times \Sigma Q$.

\end{theorem}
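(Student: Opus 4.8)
\noindent\textit{Proof proposal.}
The plan is to prove the two inclusions between the product topology $\sigma(P)\times\sigma(Q)$ on $P\times Q$ and the Scott topology $\sigma(P\times Q)$; only $\sigma(P\times Q)\subseteq\sigma(P)\times\sigma(Q)$ needs work. The other inclusion is routine: if $U\in\sigma(P)$ and $V\in\sigma(Q)$ then $U\times V$ is an up-set, and since suprema in $P\times Q$ are computed coordinatewise, a directed $D\subseteq P\times Q$ with $\bigvee D\in U\times V$ has $\bigvee\pi_1(D)\in U$ and $\bigvee\pi_2(D)\in V$, so $D\cap(U\times V)\neq\emptyset$ by directedness; hence $U\times V\in\sigma(P\times Q)$.

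For $\sigma(P\times Q)\subseteq\sigma(P)\times\sigma(Q)$, note first that $\Sigma P\times\Sigma Q$ is first-countable (combine countable neighbourhood bases at $x$ and at $y$), hence a Fr\'{e}chet space, so a subset of $P\times Q$ is closed in $\Sigma P\times\Sigma Q$ iff it is closed under limits of sequences converging in $\Sigma P\times\Sigma Q$. It therefore suffices to prove the Key Claim: \emph{if $p_n\to x$ in $\Sigma P$ and $q_n\to y$ in $\Sigma Q$, then $(p_n,q_n)\to(x,y)$ in $\Sigma(P\times Q)$}. Indeed, granting this, any $\sigma(P\times Q)$-closed set $C$ contains the limit of every $\Sigma P\times\Sigma Q$-convergent sequence drawn from $C$, hence is closed in the Fr\'{e}chet space $\Sigma P\times\Sigma Q$, which gives the desired inclusion. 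Equivalently, fixing countable decreasing Scott-open bases $(U_n)_n$ at $x$ and $(V_n)_n$ at $y$ and arguing by contradiction, the Key Claim says that no $W\in\sigma(P\times Q)$ with $(x,y)\in W$ can satisfy $(U_n\times V_n)\setminus W\neq\emptyset$ for all $n$.

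To prove the Key Claim, fix $W\in\sigma(P\times Q)$ with $(x,y)\in W$ and suppose, for a contradiction, that $(p_n,q_n)\to(x,y)$ componentwise while $(p_n,q_n)\notin W$ for all $n$. The main device is sectioning: for each $p\in P$ the set $W_p:=\{q\in Q:(p,q)\in W\}$ is Scott-open in $Q$, the map $p\mapsto W_p$ is Scott-continuous from $\Sigma P$ to $\Sigma\mathcal{O}(Q)$, and likewise for sections in the other variable --- all immediate from $W$ being Scott-open. This already strengthens componentwise convergence: for every $q\in W_x$ the $P$-section $\{p:(p,q)\in W\}$ is a Scott-open neighbourhood of $x$, hence contains $p_n$ for all large $n$, so $q\in W_{p_n}$ eventually; thus $W_x\subseteq\{q:q\in W_{p_n}\ \text{for all large}\ n\}$ (and symmetrically with the roles of $P,Q$ exchanged). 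In particular $y\in W_{p_n}$ from some index on, so eventually each $W_{p_n}$ is a Scott-open neighbourhood of $y$.

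The remaining step --- which I expect to be the technical heart of the proof --- is to combine this with $q_n\to y$ and first-countability of $\Sigma Q$ so as to conclude $q_n\in W_{p_n}$ for large $n$, contradicting $(p_n,q_n)\notin W$. The natural target is a \emph{single} Scott-open neighbourhood $V$ of $y$ with $V\subseteq W_{p_n}$ for all large $n$, equivalently $x\in\mathrm{int}_{\Sigma P}\{p:V\subseteq W_p\}$: from such a $V$ one gets $q_n\in V\subseteq W_{p_n}$ eventually, i.e. $(p_n,q_n)\in W$, and in fact $\mathrm{int}_{\Sigma P}\{p:V\subseteq W_p\}\times V$ is then a box neighbourhood of $(x,y)$ contained in $W$, yielding the theorem directly. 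Producing $V$ is a diagonalization across the two countable bases $(U_n)_n,(V_n)_n$, and the subtle point is that a naive selection of witnesses merely regenerates a sequence of exactly the same shape rather than closing the loop; so the argument must use Scott-continuity of $p\mapsto W_p$ together with how the neighbourhood filter of $y$ sits inside $\mathcal{O}(Q)$ to force the sections $W_{p_n}$ to stabilize around $y$ --- and this is where the full strength of first-countability of both factors gets consumed.
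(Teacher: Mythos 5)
There is a genuine gap, and you have located it yourself: the ``remaining step'' that you describe as the technical heart is exactly the part you do not carry out, and the target you aim at there is not the right one. You ask for a \emph{single} Scott-open $V\ni y$ with $V\subseteq W_{p_n}$ for all large $n$; nothing in the hypotheses guarantees such a $V$ exists (the sets $W_{p_n}$ need not share an open neighbourhood of $y$), and the paper's argument never produces one. The idea you are missing is compactness, which you never invoke: since $q_n\to y$ in $\Sigma Q$, the tail-plus-limit set $E=\{q_n\mid n\ge n_0\}\cup\{y\}$ (with $n_0$ chosen so that $E\subseteq W_x$) is a \emph{compact} subset of $\Sigma Q$, hence $\Box E=\{G\in\sigma(Q)\mid E\subseteq G\}$ is Scott-open in the lattice $\sigma(Q)$. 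Your section map $p\mapsto W_p$ is Scott-continuous into $\Sigma\sigma(Q)$, so its preimage of $\Box E$ is a Scott-open neighbourhood of $x$; since $p_n\to x$, eventually $E\subseteq W_{p_n}$, and in particular $q_n\in W_{p_n}$, i.e.\ $(p_n,q_n)\in W$, the desired contradiction. Note that this replaces your hoped-for open $V$ by the compact set $E$, and the ``first-countability of both factors'' is not consumed at this stage at all --- only the convergence of the two sequences is used; first-countability (or, as in the paper's Theorem 3.4, the Fr\'{e}chet property of the product) is spent entirely in the reduction to sequences.

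Apart from this, your skeleton does match the paper's: the paper proves the stronger Theorem 3.4 (Fr\'{e}chet product suffices) by taking a Scott-open $T$ that is not product-open at $(p,q)$, extracting from the net of witnesses a sequence $(p_n,q_n)\notin T$ converging componentwise, and then running precisely the section-map-plus-$\Box E$ argument above; your ``Key Claim'' is the contrapositive of that. So the proposal is a correct reduction with the decisive lemma --- compactness of a convergent sequence with its limit, and Scott-openness of $\Box K$ in $\sigma(Q)$ for compact $K$ --- left unproved and, as stated, misidentified.
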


Now, we will give a weaker condition than first-countability such that the Theorem 3.4 still holds.

\begin{theorem} Let $P$ and $Q$ be two posets. If $\Sigma P\times \Sigma Q$ is a Fr\'{e}chet space, then $\Sigma (P\times Q)=\Sigma P \times \Sigma Q$.

\end{theorem}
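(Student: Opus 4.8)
The plan is to prove the nontrivial inclusion $\sigma(P\times Q)\subseteq\sigma(P)\times\sigma(Q)$, since the reverse inclusion is standard: the projections $P\times Q\to P$ and $P\times Q\to Q$ preserve directed suprema (suprema in $P\times Q$ being computed coordinatewise), hence are Scott continuous, so every $U\times V$ with $U\in\sigma(P)$, $V\in\sigma(Q)$ is Scott open in $P\times Q$. So I would fix $W\in\sigma(P\times Q)$ and suppose, for a contradiction, that $W$ is not open in $\Sigma P\times\Sigma Q$. Then there is a point $(x_0,y_0)\in W$ not lying in the interior of $W$ in the product space, i.e.\ $(x_0,y_0)\in\overline{C}$ where $C:=(P\times Q)\setminus W$ and the closure is taken in $\Sigma P\times\Sigma Q$. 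Since $\Sigma P\times\Sigma Q$ is a Fr\'echet space, there is a sequence $(x_n,y_n)_{n\in\omega}$ in $C$ converging to $(x_0,y_0)$ in $\Sigma P\times\Sigma Q$; equivalently $x_n\to x_0$ in $\Sigma P$, $y_n\to y_0$ in $\Sigma Q$, and $(x_n,y_n)\notin W$ for every $n$.

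Next I would record two elementary facts about the sections of $W$. For $a\in P$ put $W_a:=\{b\in Q\mid (a,b)\in W\}$ and for $b\in Q$ put $W^b:=\{a\in P\mid (a,b)\in W\}$; using again that suprema in $P\times Q$ are coordinatewise, one checks directly that each $W_a$ is Scott open in $Q$ and each $W^b$ is Scott open in $P$. Since $W_{x_0}\in\sigma(Q)$ contains $y_0$ and $y_n\to y_0$, after discarding finitely many terms (and reindexing) we may assume $(x_0,y_n)\in W$ for every $n$; this does not disturb $x_n\to x_0$, $y_n\to y_0$, or $(x_n,y_n)\notin W$.

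The heart of the argument is the claim that
\[
\mathcal{A}:=\{a\in P\mid (a,y_n)\in W \text{ for all } n\in\omega\}=\bigcap_{n\in\omega}W^{y_n}
\]
is Scott open in $P$. It is clearly an upper set. For inaccessibility by directed suprema, let $D\subseteq P$ be directed with $s:=\bigvee D$ existing and $s\in\mathcal{A}$. From $(s,y_0)\in W$ and Scott openness of $W$ one gets $d_0\in D$ with $(d_0,y_0)\in W$; then $W_{d_0}$ is a Scott open neighbourhood of $y_0$ in $\Sigma Q$, so $y_n\in W_{d_0}$ for all $n\geq K$, some $K$. For each of the finitely many indices $n<K$, Scott openness of $W$ applied to $(s,y_n)\in W$ yields $d_n\in D$ with $(d_n,y_n)\in W$, and by directedness there is $d\in D$ above $d_0,\dots,d_{K-1}$. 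Then $(d,y_n)\in W$ for every $n$: for $n<K$ because $d\geq d_n$, and for $n\geq K$ because $d\geq d_0$ and $(d_0,y_n)\in W$. Hence $d\in D\cap\mathcal{A}$, proving the claim. The point of this step — and the only place the convergence $y_n\to y_0$ is really used — is that the convergence collapses the a priori infinite family of conditions defining $\mathcal{A}$ to a finite one at any directed supremum, so that $\mathcal{A}$ is Scott open even though a countable intersection of Scott open sets need not be; this is where I expect the real work to lie.

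Finally, $x_0\in\mathcal{A}$ by the normalisation of the second paragraph, so $\mathcal{A}$ is a Scott open neighbourhood of $x_0$ in $\Sigma P$; since $x_n\to x_0$ in $\Sigma P$ we get $x_n\in\mathcal{A}$ for all large $n$, and then $(x_n,y_n)\in W$ for those $n$, contradicting $(x_n,y_n)\notin W$. This contradiction shows $W\in\sigma(P)\times\sigma(Q)$, completing the proof. It is worth noting that the hypothesis that the \emph{product} $\Sigma P\times\Sigma Q$ is Fr\'echet is used exactly once — to produce the convergent sequence lying in $C$ — whereas Fr\'echetness of $\Sigma P$ and $\Sigma Q$ separately would not suffice; and since first-countable spaces are Fr\'echet, this recovers Theorem 3.4.
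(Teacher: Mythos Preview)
Your argument is essentially the paper's, unpacked: both extract a sequence $(x_n,y_n)\notin W$ converging to $(x_0,y_0)\in W$ and reach a contradiction via a Scott-open neighbourhood of $x_0$ in $\Sigma P$ of the form ``all $a$ with $(a,y_n)\in W$ for every relevant $n$''. The paper realises this set as $f^{-1}(\Box E)$ for the Scott-continuous map $f\colon P\to\sigma(Q)$, $a\mapsto W_a$, and the compact set $E=\{y_0\}\cup\{y_n:n\ge n_0\}$, using that $\Box E$ is Scott open in $\sigma(Q)$; you instead verify Scott-openness of the set by hand, which is a little more elementary since it sidesteps the Scott topology on $\sigma(Q)$ and the compactness of $E$.

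There is one slip to repair. Your indexing clashes (the sequence $(x_n,y_n)_{n\in\omega}$ is declared to lie in $C$, yet you also use $(x_0,y_0)$ for the limit in $W$); reading the sequence as indexed by $n\ge 1$, the step ``from $(s,y_0)\in W$'' in your Scott-openness argument is not justified by $s\in\mathcal{A}=\bigcap_{n\ge1}W^{y_n}$ alone, and without it you cannot produce the $d_0$ whose section $W_{d_0}$ eventually absorbs the $y_n$. The fix is precisely what the paper's choice of $E$ encodes: include the limit point in the intersection, i.e.\ set $\mathcal{A}=W^{y_0}\cap\bigcap_{n\ge1}W^{y_n}$. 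Then $x_0\in\mathcal{A}$ still holds (since $(x_0,y_0)\in W$), the assertion $(s,y_0)\in W$ becomes part of the hypothesis $s\in\mathcal{A}$, and the rest of your verification goes through verbatim.
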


\begin{proof}
It is no doubt that every open set of the product space $\Sigma P \times \Sigma Q$ is Scott open in $P\times Q$. Let $T\in\sigma(P\times Q)$ and $(p,q)\in T$. Let $\mathcal{U}_{p}$ and $\mathcal{U}_{q}$ be the set of all Scott open neighbourhoods at $p$ and $q$ in $\Sigma P$ and $\Sigma Q$, respectively. Then $\{U\times V\mid U\in\mathcal{U}_{p},V\in\mathcal{U}_{q}\}$ forms a neighbourhood base at $(p,q)$ in $\Sigma P \times \Sigma Q$.
Assume that $T$ is not open in $\Sigma P \times \Sigma Q$. This implies $U\times V\not\subseteq T$ for all $U\in\mathcal{U}_{p},V\in\mathcal{U}_{q}$. Choose a $(p_{U,V},q_{U,V})\in U\times V\setminus T$. It is no doubt that $$(p,q)\in\overline{\{(p_{U,V},q_{U,V})\mid U\in\mathcal{U}_{p},V\in\mathcal{U}_{q}\}}_{\Sigma P\times\Sigma Q}.$$
Since $\Sigma P\times \Sigma Q$ is a Fr\'{e}chet space, there
 is a sequence $(p_{n},q_{n})_{n\in\omega}$ converging to $(p,q)$ in the product topology $\Sigma P\times \Sigma Q$, where the sequence $(p_{n},q_{n})_{n\in\omega}$ is contained in $\{(p_{U,V},q_{U,V})\mid U\in\mathcal{U}_{p},V\in\mathcal{U}_{q}\}$. This means that $(p_{n})_{n\in\omega}$ and $(q_{n})_{n\in\omega}$ converge to $p$ and $q$ in $\Sigma P$ and $\Sigma Q$, respectively. Define a map $f:P\longrightarrow \sigma(Q)$ by
$$\forall\ a\in P, f(a)=\{b\mid (a,b)\in T\}.$$
As $T$ is Scott open in $P\times Q$, $f(a)$ is a Scott open set of $Q$. So $f$ is well-defined. Let $D$ be a directed subset of $P$ with $\bigvee D$ existing. Then we have $$\bigcup f(D)\subseteq f(\bigvee D)=\{b\mid (\bigvee D,b)\in T\}.$$ Choose a $b\in f(\bigvee D)$. Since $(\bigvee D,b)=\bigvee\limits_{d\in D}(d,b)\in T\in\sigma(P\times Q)$, $(d_{0},b)\in T$ for some $d_{0}\in D$. Equivalently, $b\in f(d_{0})\subseteq\bigcup f(D)$. So $\bigcup f(D)=f(\bigvee D)$ and thus $f$ is Scott continuous. Since $(q_{n})_{n\in\omega}$ converges to $q$ and $q\in f(p)\in\sigma(Q)$, $$\{q_{n}\mid n_{0}\leq n\}\cup\{q\}\subseteq f(p)$$ for some $n_{0}\in\omega$. Let $$E=\{q_{n}\mid n_{0}\leq n\}\cup\{q\}$$ and $Q\subseteq \bigcup\limits_{j\in J} V_{j}$, for some $\{V_{j}\mid j\in J\}\subseteq \sigma(Q)$. Choose a $q\in V_{j_{0}}$. Then then is a $n_{1}$ with $n_{0}\leq n_{1}$ such that $q_{m}\in V_{j_{0}}$ for all $n_{1}\leq m$. $\forall\ n_{0}\leq k\leq n_{1}$, take a $V_{j_{k}}$ satisfying $q_{k}\in V_{j_{k}}$. Then we have that $$\{V_{j_{k}}\mid n_{0}\leq k\leq n_{1}\}\cup\{V_{j_{0}}\}$$ is a finite open cover of $E$. Therefore, $E$ is a compact subset of $\Sigma Q$. It follows that $\Box E=\{G\in\sigma(Q)\mid E\subseteq G\}$ is Scott open of $\sigma(Q)$. Obviously, $f(p)\in\Box E$. By the continuity of $f$, $f^{-1}(\Box E)$ is a Scott open neighbourhood of $p$. As the sequence $(p_{n})_{n\in\omega}$ converges to $p$, there exists $n_{2}\in\omega$ with $n_{0}\leq n_{2}$ such that $p_{n}\in f^{-1}(\Box E)$ for all $n_{2}\leq n$. Particularly, $p_{n_{2}}\in f^{-1}(\Box E)$, or equivalently, $E\subseteq f(p_{n_{2}})$. Consequently, $q_{n_{2}}\in f(p_{n_{2}})$ and then $(p_{n_{2}},q_{n_{2}})\in T$, a contradiction. The proof is complete.
\end{proof}

\begin{theorem} {\rm (see \cite{ME182}) Let $P$ be a poset such that $\Sigma P\times\Sigma P=\Sigma (P\times P)$. If one of the following statements hold

(1) $\Lambda P$ is compact;

(2)  $\Sigma P$ is well-filtered and coherent,

then $\Sigma P$ is sober.}
\end{theorem}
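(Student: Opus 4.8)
The plan is to use the standard description of sobriety for Scott spaces: since $\overline{\{x\}}=\dn x$ in $\Sigma P$ and $\Sigma P$ is $T_{0}$, the space $\Sigma P$ is sober precisely when $P$ is a dcpo and every irreducible Scott-closed $A\subseteq P$ has a greatest element (equivalently, as $A$ is then closed under directed suprema, when every such $A$ is directed and hence $\bigvee A=\max A\in A$). So I would first clear the bookkeeping. We may assume $P$ is a dcpo: this is automatic under (1), since a Lawson-compact poset is directed-complete (for a directed $D$ the sets $\up d$ are Lawson-closed and $\{\up d\mid d\in D\}$ is filtered, so $\bigcap_{d\in D}\up d\neq\emptyset$ by compactness, and one reads off $\bigvee D$); under (2) it is either part of the setting or derived the same way, using that each $\up d$ is compact saturated in $\Sigma P$ and $\Sigma P$ is well-filtered. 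Next I would reduce (1) to (2): a Lawson-compact dcpo has a well-filtered and coherent Scott topology, so in both cases we may assume $\Sigma P$ is well-filtered and coherent and $\Sigma P\times\Sigma P=\Sigma(P\times P)$.

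Now fix an irreducible Scott-closed $A\subseteq P$ and assume, for a contradiction, that $A$ is not directed (equivalently, has no greatest element). Since each $\dn a$ is Scott-closed and the Scott-open sets meeting $A$ form a filter, this gives $A\setminus\dn F\neq\emptyset$ for every finite $F\subseteq A$. The target is to manufacture, in spite of this, a directed subset $D$ of $A$ that is cofinal in $A$: then $\bigvee D$ exists, lies in $A$, dominates $A$, and contradicts $A\setminus\dn\{\bigvee D\}\neq\emptyset$. The naive attempt — take the family $\{\up a\mid a\in A\}$ of compact saturated sets and apply well-filteredness — fails exactly because $A$ need not be directed in $P$, so that family need not be filtered; the whole of Johnstone's dcpo $\mathbb{J}$ is an irreducible Scott-closed set that is not directed, and this is the phenomenon the remaining hypotheses must defeat. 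This is where $\Sigma P\times\Sigma P=\Sigma(P\times P)$ is used: I would pass to $A\times A$, which is irreducible and closed in $\Sigma P\times\Sigma P$ and hence, by the hypothesis, irreducible and closed in $\Sigma(P\times P)$, and run a two-variable argument of Rudin's-Lemma type there. Coherence keeps the finitely generated upper sets that arise compact saturated under binary intersection; well-filteredness turns the resulting filtered family into a genuine limit; and having the extra coordinate supplies the room that the failure of directedness of $A$ in $P$ otherwise denies — producing the directed cofinal $D\subseteq A$.

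The main obstacle is exactly that middle step: extracting a directed cofinal subset from an irreducible Scott-closed set; the rest is routine. Each hypothesis genuinely contributes there — coherence closes the relevant compact saturated sets under finite meets, well-filteredness converts ``the opens meeting $A$ form a filter'' into an actual point, and the identity $\Sigma P\times\Sigma P=\Sigma(P\times P)$ cannot be removed, since it is what legitimises treating the diagonal/square manipulation as a statement about Scott continuity on $P\times P$. I would package the heart of the argument as a lemma: \emph{if $\Sigma P$ is well-filtered and coherent and $\Sigma P\times\Sigma P=\Sigma(P\times P)$, then every irreducible closed subset of $\Sigma P$ is a minimal closed set meeting some filtered family of compact saturated sets}; one then invokes the known fact that a well-filtered space with this property (a well-filtered Rudin space) is sober.
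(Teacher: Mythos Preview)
The paper does not supply its own proof of this statement; it is quoted from Lawson--Xu \cite{ME182} and used as a black box to obtain Corollary~3.6, so there is nothing in the present paper to compare your proposal against.

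On the proposal itself, two points deserve flagging. First, your reduction of (1) to (2) via ``a Lawson-compact dcpo has a well-filtered and coherent Scott topology'' is not a known theorem and is unjustified as written: arbitrary compact saturated subsets of $\Sigma P$ need not be Lawson-closed, so compactness of $\Lambda P$ gives no direct control over them. What \emph{is} true --- and what actually makes case~(1) run --- is that each $\up a\cap\up b$ is Lawson-closed, and for a filtered family of Lawson-closed sets whose intersection lies in a Scott-open $U$, compactness of $\Lambda P$ already forces some member into $U$. So (1) does not reduce to (2); rather, both hypotheses feed the same computation with different justifications for the single step where they are invoked. Second, the decisive step --- which you defer to an unformulated ``Rudin-type'' lemma --- is missing. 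The concrete mechanism is: for $x,y\in A$ (irreducible Scott-closed) set
\[
W=\{(a,b)\in P\times P:\up a\cap\up b\cap A=\emptyset\}.
\]
One checks $W$ is Scott open in $P\times P$: it is upper, and if $D\subseteq P\times P$ is directed with $\bigvee D=(a,b)\in W$ then $\{\up d\cap\up e:(d,e)\in D\}$ is filtered with intersection $\up a\cap\up b\subseteq P\setminus A$, and either coherence $+$ well-filteredness (case~(2)) or Lawson compactness (case~(1)) puts some $\up d\cap\up e$ inside $P\setminus A$. By the product hypothesis $W$ is then open in $\Sigma P\times\Sigma P$; since $W\cap\Delta(A)=\emptyset$ while $(x,y)\in A\times A=\overline{\Delta(A)}^{\,\Sigma P\times\Sigma P}$ (this equality uses irreducibility of $A$), we conclude $(x,y)\notin W$, i.e.\ some $z\in A$ dominates both $x$ and $y$. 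Your write-up never isolates this set $W$ nor explains how the product hypothesis is actually consumed, and without that the proposal remains a plan rather than a proof.
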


\begin{corollary}{\rm Let $P$ be a poset such that $\Sigma P\times\Sigma P$ is a Fr\'{e}chet space. If one of the following statements hold

(1) $\Lambda P$ is compact;

(2)  $\Sigma P$ is well-filtered and coherent,

then $\Sigma P$ is sober.}

\end{corollary}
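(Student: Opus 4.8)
The plan is to obtain the corollary as an immediate consequence of the two theorems already established, so that it reduces to one invocation of each and needs no new argument. First I would apply the Fr\'{e}chet-product theorem (Theorem 3.4 above) with $Q=P$: since $\Sigma P\times\Sigma P$ is assumed to be a Fr\'{e}chet space, Theorem 3.4 gives $\Sigma(P\times P)=\Sigma P\times\Sigma P$; that is, the product topology on $\Sigma P\times\Sigma P$ coincides with the Scott topology of the product poset $P\times P$.

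With this identity in hand, the standing hypothesis of Xu's sobriety criterion (Theorem 3.5 above) is satisfied. Hence, whichever of the two side conditions is assumed --- $\Lambda P$ compact, or $\Sigma P$ well-filtered and coherent --- Theorem 3.5 applies verbatim and yields that $\Sigma P$ is sober, which completes the proof.

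There is no genuine obstacle here: once Theorem 3.4 is available, the corollary is purely a matter of checking the hypothesis of Theorem 3.5. The one point worth emphasising is where the Fr\'{e}chet assumption actually enters --- it is imposed on the \emph{product} space $\Sigma P\times\Sigma P$, not merely on the factor $\Sigma P$, and this is exactly the shape of hypothesis that the proof of Theorem 3.4 needs in order to extract the converging sequence on which it rests. Since first-countability of $\Sigma P$ forces $\Sigma P\times\Sigma P$ to be first-countable, hence Fr\'{e}chet, while a Fr\'{e}chet space need not be first-countable (see Example 3.2), this is strictly weaker than asking $\Sigma P$ itself to be first-countable, which is precisely the point of the statement.
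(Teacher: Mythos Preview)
Your proposal is correct and matches the paper's intended argument exactly: the corollary is stated immediately after Theorem~3.5 with no separate proof, since it follows at once by applying Theorem~3.4 with $Q=P$ to obtain $\Sigma(P\times P)=\Sigma P\times\Sigma P$ and then invoking Theorem~3.5. Your additional commentary on why the Fr\'{e}chet hypothesis must be placed on the product is accurate and a useful remark, though it goes beyond what the paper records.
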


\begin{corollary} Let $P$ be a bounded complete dcpo. If $\Sigma P\times\Sigma P$ is a Fr\'{e}chet space , then $\Sigma P$ is sober.

\end{corollary}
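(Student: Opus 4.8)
The plan is to prove directly that every irreducible Scott-closed subset $A$ of $\Sigma P$ is directed; since such an $A$ is closed under directed suprema, it then has a greatest element $\bigvee A$, so $A=\overline{\{\bigvee A\}}$ and $\Sigma P$ is sober. The only consequence of the Fr\'{e}chet hypothesis I would use is that, by Theorem 3.4, $\Sigma(P\times P)=\Sigma P\times\Sigma P$.

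First I would record two facts about $P\times P$, neither of which needs Fr\'{e}chetness. Put $C=\{(x,y)\in P\times P:\{x,y\}\text{ has an upper bound in }P\}$. (i) $C$ is Scott-closed in $P\times P$: its complement is an upper set, and it is inaccessible by directed suprema, because if every member of a directed $D\subseteq P\times P$ lies in $C$, then the directed family $\{d_1\vee d_2:(d_1,d_2)\in D\}$ has a supremum dominating both coordinate suprema of $D$, whence $\bigvee D\in C$. (ii) Since $P$ is bounded complete, the join map $m\colon C\to P$, $m(x,y)=x\vee y$, is well defined, monotone, and preserves directed suprema, hence is Scott-continuous; and as $C$ is Scott-closed in $P\times P$, the subspace topology it inherits from $\Sigma(P\times P)$ is exactly $\sigma(C)$, so $m$ is continuous from that subspace into $\Sigma P$.

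Now take an irreducible Scott-closed $A\subseteq\Sigma P$ and $a,b\in A$. For each pair $(U,V)$ of Scott-open sets with $a\in U$ and $b\in V$, irreducibility of $A$ gives $A\cap U\cap V\neq\emptyset$; pick $c_{U,V}$ there. Ordering these pairs by reverse inclusion, $(c_{U,V})$ is a net in $A$ converging to $a$ (it is eventually in every Scott-open neighbourhood of $a$, via the index $(U,P)$) and also to $b$, so the diagonal net $\bigl((c_{U,V},c_{U,V})\bigr)$ converges to $(a,b)$ in $\Sigma P\times\Sigma P=\Sigma(P\times P)$. Each of its terms lies in the closed set $C$, so $(a,b)\in C$, i.e.\ $a\vee b$ exists; and, regarding this net inside the subspace $C$ and using the continuity of $m$, the net $(c_{U,V})=\bigl(m(c_{U,V},c_{U,V})\bigr)$ converges to $a\vee b$ in $\Sigma P$. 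As $(c_{U,V})$ lies in the closed set $A$, it follows that $a\vee b\in A$. Hence $A$ is directed, completing the proof.

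The conceptual heart is the last step: one single net in $A$ that converges to two points $a$ and $b$ at once, pushed through the join map, drags $a\vee b$ back into $A$. The place that needs care is the passage through $\Sigma(P\times P)=\Sigma P\times\Sigma P$, which is exactly what makes the Scott-closedness of $C$ and the Scott-continuity of $m$ applicable to convergence taking place in the product. (Alternatively one could invoke Corollary 3.6(2) once it is known that the Scott space of every bounded complete dcpo is well-filtered and coherent; the argument above sidesteps that and uses only $\Sigma(P\times P)=\Sigma P\times\Sigma P$.)
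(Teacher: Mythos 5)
Your proof is correct, and it takes a genuinely different route from the paper. The paper obtains this corollary by feeding Theorem 3.4 into Theorem 3.5/Corollary 3.6: one must know that a bounded complete dcpo satisfies one of the hypotheses there (Lawson compactness of $\Lambda P$, or well-filteredness plus coherence of $\Sigma P$), which rests on nontrivial external facts about bounded complete dcpos. You instead use only Theorem 3.4 (i.e.\ $\Sigma(P\times P)=\Sigma P\times\Sigma P$) and prove directly that every irreducible Scott-closed set is directed, by exhibiting the set $C$ of bounded pairs as Scott-closed in $P\times P$, checking that the partial join $m\colon C\to P$ is Scott-continuous (hence continuous on the subspace $C$, since the subspace topology on a Scott-closed set is its own Scott topology), and pushing a single net $c_{U,V}\in A\cap U\cap V$ -- which converges simultaneously to $a$ and to $b$ -- through $m$ to drag $a\vee b$ into the closed set $A$. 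All the individual steps check out: $C$ is a lower set closed under directed suprema because $\{d_1\vee d_2\}$ is a directed family dominating both coordinate suprema; $m$ preserves directed suprema by the least-upper-bound property of joins; and a net contained in a closed set can only converge to points of that set, even without Hausdorffness. What your argument buys is self-containedness: it is essentially the classical ``jointly continuous sup implies sober'' argument (the paper's Lemma 4.5) adapted to the partial sup of a bounded complete dcpo, and it avoids invoking Lawson compactness or well-filteredness of bounded complete dcpos altogether. What the paper's route buys is brevity and uniformity, since Corollary 3.6 also covers the non-lattice cases. The only cosmetic slip is the phrase ``via the index $(U,P)$'', which should read ``via the index $(W,P)$'' for the given neighbourhood $W$ of $a$; this does not affect the argument.
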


\begin{corollary}  {\rm(see \cite{TH62})} Let $P$ be a complete lattice. If $\Sigma P$ is first-countable, then $\Sigma P$ is sober .

\end{corollary}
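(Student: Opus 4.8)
The plan is to obtain this statement as an immediate consequence of Corollary 3.8 (or, equivalently, of Corollary 3.7), so the real work has already been done in Theorem 3.5 and its corollaries.

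First I would check that $\Sigma P\times\Sigma P$ is a Fr\'{e}chet space. Since $\Sigma P$ is first-countable, for each $p\in P$ choose a countable base $\{B^{p}_{n}\mid n\in\omega\}$ of Scott-open neighbourhoods of $p$. Then $\{B^{p}_{n}\times B^{q}_{m}\mid n,m\in\omega\}$ is a countable neighbourhood base of $(p,q)$ in the product space $\Sigma P\times\Sigma P$, so $\Sigma P\times\Sigma P$ is first-countable; being first-countable, it is a Fr\'{e}chet space, as noted right after Definition 3.1. Next I would observe that a complete lattice $P$ is in particular a bounded complete dcpo: it has all directed suprema, and every subset of $P$ possessing an upper bound has a least upper bound. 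Hence Corollary 3.8 applies verbatim and yields that $\Sigma P$ is sober.

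As an alternative route that bypasses Corollary 3.8, one may invoke Corollary 3.7(1): for a complete lattice $P$ the Lawson space $\Lambda P$ is compact (a classical fact; see \cite{GG03}), and $\Sigma P\times\Sigma P$ is a Fr\'{e}chet space by the paragraph above, so $\Sigma P$ is sober. I do not expect any genuine obstacle here; the only points that need attention are the two routine facts used above --- that finite products preserve first-countability, and that a complete lattice is bounded complete --- and both are standard, so the statement follows with essentially no further computation.
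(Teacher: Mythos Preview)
Your proposal is correct and matches the paper's intended derivation: the statement is placed immediately after the bounded-complete corollary and is meant to follow from it via the two routine observations you make (first-countability is preserved under finite products, hence $\Sigma P\times\Sigma P$ is Fr\'{e}chet; and every complete lattice is a bounded complete dcpo). The alternative route through the Lawson-compactness clause is also fine and equally in the spirit of the paper.
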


\subsection{The applications of Fr\'{e}chet spaces to the sober spaces}

\vspace*{0.5cm}

Now, we will extend the above Corollary 3.7 to $T_{0}$ spaces. The following Theorem 3.9 is one of curial conclusions in the whole paper.

\begin{theorem} Let $X$ be an $\omega$-well-filtered coherent $d$-space. If $X\times X$ is a Fr\'{e}chet space, then $X$ is sober.

\end{theorem}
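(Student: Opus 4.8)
The plan is to prove that every irreducible closed subset of $X$ has a generic point; since $X$ is $T_0$ this is exactly sobriety.

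So let $A\subseteq X$ be irreducible closed. Because $X$ is a $d$-space, the closed subsets of $X$ are Scott closed in $\Omega(X)$, so $A$ is a sub-dcpo of $\Omega(X)$ and the directed suprema computed in $A$ agree with those in $X$. By Zorn's Lemma (chains of $A$ are directed and hence have suprema in $A$) every element of $A$ lies below a maximal one; if $A$ has a single maximal element $m$ then $A=\dn m=\overline{\{m\}}$ and we are done, so I assume $A$ has at least two distinct maximal elements and aim for a contradiction. The device I would use is this: it suffices to exhibit a directed subset $D\subseteq A$ with $\overline{D}=A$, because then $s:=\bigvee D\in A$, $D\subseteq\overline{\{s\}}\subseteq A$, hence $\overline{\{s\}}=\overline{D}=A$, so $s$ is a maximum of $A$, contradicting the two maximal elements (and in fact directly exhibiting the generic point).

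First I would record the two consequences of the hypotheses that do the work. From Fr\'{e}chetness of $X\times X$: since $A$ is irreducible, $U\cap V\cap A\ne\emptyset$ whenever $U,V$ are open sets meeting $A$, so every basic open box of $X\times X$ meeting $A\times A$ already meets the diagonal $\Delta(A)=\{(x,x):x\in A\}$; hence $\overline{\Delta(A)}=A\times A$ in $X\times X$, and applying the Fr\'{e}chet property to this closure gives the key fact that \emph{for all $p,q\in A$ there is a sequence $(a_n)_{n\in\omega}$ in $A$ with $a_n\to p$ and $a_n\to q$ in $X$} (in particular $X$ itself is Fr\'{e}chet, via the diagonal embedding $X\hookrightarrow X\times X$). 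From coherence: for every $x\in X$ the saturated set $\up x$ is compact, being contained in every open set that contains $x$, so by induction using coherence $\up c_1\cap\cdots\cap\up c_k\in\mathcal{Q}(X)$, and in particular is nonempty, for all $c_1,\dots,c_k\in X$; applying $\omega$-well-filteredness with the open set $\emptyset$ to the decreasing sequence $\bigl(\bigcap_{i\le k}\up c_i\bigr)_k$ of nonempty compact saturated sets then shows that \emph{every sequence $(c_n)_{n\in\omega}$ in $X$ has a common upper bound}, i.e.\ $\bigcap_n\up c_n\ne\emptyset$; moreover any such common upper bound $z$ of $\{c_n:n\in\omega\}$ satisfies $p\le z$ whenever $c_n\to p$, since then $p\in\overline{\{c_n:n\ge k\}}\subseteq\dn z$ for every $k$.

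Finally I would run the endgame. Fix (Zorn again) a maximal directed subset $D$ of $A$ and put $s=\bigvee D\in A$. If $\overline{D}\ne A$, pick an open set $W$ with $W\cap A\ne\emptyset$ and $W\cap D=\emptyset$, and a point $a\in W\cap A$; then $s\notin W$, since $W\cap A$ is Scott open in the dcpo $A$ (the subspace topology on the closed set $A$ is contained in the Scott topology of $A$) and $D$ is directed with supremum $s$. Using the displayed Fr\'{e}chet fact, take a sequence $(c_n)$ in $A$ with $c_n\to a$ and $c_n\to s$. If $\bigl(\bigcap_n\up c_n\bigr)\cap A\ne\emptyset$, say $z$ lies in it, then $z\ge a$ and $z\ge s$ by the last remark above, so $z\in W$ (an up-set containing $a$) and $D\cup\{z\}$ is a directed subset of $A$ strictly larger than $D$ (because $z\in W$ while $W\cap D=\emptyset$), contradicting maximality of $D$ and completing the proof. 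Everything is thereby reduced to one point, which I expect to be the main obstacle: \emph{showing that the sequence $(c_n)\subseteq A$ produced by the Fr\'{e}chet property, which converges to $a$ and to $s$, has an upper bound inside the closed set $A$, i.e.\ $\bigl(\bigcap_n\up c_n\bigr)\cap A\ne\emptyset$}; coherence and $\omega$-well-filteredness already furnish such an upper bound in $X$, and the difficulty is to keep it inside $A$. I would attack this by a second, more delicate, application of the Fr\'{e}chet property of $X\times X$ together with $\omega$-well-filteredness --- organising the common-limit sequences attached to the pairs $(c_i,c_j)$ and $(c_i,s)$ into a countably filtered family of compact saturated sets whose $\omega$-well-filtered intersection is forced to meet $A$. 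This step is the purely topological counterpart of the well-filtered and coherent case of Theorem 3.5, the Fr\'{e}chet hypothesis playing the role that countable neighbourhood bases play in the first-countable arguments of \cite{TH62}.
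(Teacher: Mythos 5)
Your opening move is the same as the paper's: from irreducibility of $A$, every basic box neighbourhood of $(p,q)\in A\times A$ meets the diagonal $\{(a,a)\mid a\in A\}$, so Fr\'{e}chetness of $X\times X$ yields a single sequence $(a_n)$ in $A$ converging simultaneously to $p$ and to $q$. But from there the proposal does not close. You reduce everything to the claim that the common-limit sequence has an upper bound \emph{inside} $A$, you correctly observe that coherence plus $\omega$-well-filteredness only hand you an upper bound somewhere in $X$, and you then leave the decisive step as a declared ``main obstacle'' to be attacked by an unspecified ``second, more delicate'' application of the hypotheses. That is the whole content of the theorem; as written the proof is incomplete. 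The specific reason your chosen filtered family $\bigl(\bigcap_{i\le k}\up c_i\bigr)_k$ cannot work is that none of its members need contain any point of $A$, so $\omega$-well-filteredness applied to the open set $X\setminus A$ gives you no purchase: the intersection can perfectly well live entirely outside $A$.

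The paper's fix is exactly at this point, and it is worth seeing because it also removes your need for Zorn. For $x,y\in\overline{A}$ and the common-limit sequence $(a_n)$, set $Q_n=\up\{a_m\mid m\ge n\}\cup\up x$ and $P_n=\up\{a_m\mid m\ge n\}\cup\up y$. Each is compact because the sequence converges to $x$ (resp.\ $y$) -- a cover of $\up x$ captures a tail of the sequence -- and coherence makes $Q_n\cap P_n=\up\{a_m\mid m\ge n\}\cup(\up x\cap\up y)$ a countably filtered family of compact saturated sets. The crucial feature your family lacks: $a_n\in Q_n\cap P_n$, so no member is contained in $X\setminus\overline{A}$, and $\omega$-well-filteredness forces $\bigcap_n(Q_n\cap P_n)$ to meet $\overline{A}$. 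A point $z$ of this intersection lying in $\overline{A}$ either lies in $\up x\cap\up y$, or lies above infinitely many $a_n$; the latter is impossible because $x\not\le z$ or $y\not\le z$ would put a tail of $(a_n)$ inside the open set $X\setminus\dn z$. Hence every pair in $\overline{A}$ has an upper bound in $\overline{A}$, i.e.\ $\overline{A}$ is directed, and the $d$-space hypothesis gives $\overline{A}=\overline{\{\bigvee\overline{A}\}}$ directly -- no maximal directed subset, no separate argument that $\overline{D}=A$. If you want to salvage your Zorn framework you must in any case prove this same ``upper bound in $\overline{A}$'' lemma, so the detour buys nothing.
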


\begin{proof}
Suppose $A$ is an irreducible subset of $X$. We first verify that $\overline{A}$ is directed. Suppose $x,y\in \overline{A}$. Let $\mathcal{U}_{x}$ and $\mathcal{U}_{y}$ be the collection of all open neighbourhoods at $x$ and $y$, respectively. For every $U\in\mathcal{U}_{x}$ and $V\in\mathcal{U}_{y}$, by the irreducibility of $A$, $A\cap U\cap V\neq\emptyset$. Take a $a_{U,V}\in A\cap U\cap V$. It is not difficult to see that $\{U\times V\mid U\in\mathcal{U}_{x},V\in\mathcal{U}_{y}\}$ is a neighborhood base at $(x,y)$ in $X\times X$. This implies $$(x,y)\in\overline{\{(a_{U,V},a_{U,V})\mid U\in\mathcal{U}_{x},V\in\mathcal{U}_{y}\}}_{X\times X}.$$

Since $X\times X$ is a Fr\'{e}chet space, there is a $\{(a_{n},a_{n})\}_{n\in\omega}$ converging to $(x,y)$, where $\{(a_{n},a_{n})\}_{n\in\omega}$ is contained in $\{(a_{U,V},a_{U,V})\mid U\in\mathcal{U}_{x},V\in\mathcal{U}_{y}\}$. So $\{a_{n}\}_{n\in\omega}$ converges to $x$ and $y$ in $X$, simultaneously. Let $$Q_{n}=\up\{a_{n}\mid m\geq n\}\cup\up x$$ and $$P_{n}=\up\{a_{n}\mid m\geq n\}\cup\up y,$$ for all $n\in\omega$.\\
$\mathbf{Claim}$ 1: $Q_{k}$ and $P_{k}$ are compact in $X$, for all $k\in\omega$.

Let $Q_{k}\subseteq\bigcup\{V_{i}\mid i\in I\}$ for some open sets $\{V_{i}\mid i\in I\}$ of $X$. Take a $x\in V_{i_{0}}$. As $a_{n}$ converges to $x$, there is a $k< m_{0}$ such that $a_{n}\in V_{i_{0}}$ for all $m_{0}\leq n$. For all $k\leq s\leq m_{0}-1$, choose a $a_{s}\in V_{i_{s}}$. Then $$\{V_{r}\mid k\leq r\leq m_{0}-1,\mbox{or}\ r=i_{0}\}$$ forms a finite subcover of $Q_{k}$. Showing that $Q_{k}$ is compact and similar for $P_{k}$.

$\mathbf{Claim}$ 2: $\{Q_{k}\cap P_{k}\mid k\in\omega\}$ is a filtered family  of compact saturated subsets.

As $X$ is coherent and by claim 1, $Q_{k}\cap P_{k}$ is compact for all $n\in\omega$. Obviously, $\{Q_{k}\cap P_{k}\mid k\in\omega\}$ is filtered.

$\mathbf{Claim}$ 3: $\bigcap\limits_{k\in\omega}(Q_{k}\cap P_{k})\cap \overline{A}\neq\emptyset$.

Assume that $\bigcap\limits_{k\in\omega}(Q_{k}\cap P_{k})\cap \overline{A}=\emptyset$. Equivalently, $\bigcap\limits_{k\in\omega}(Q_{k}\cap P_{k})\subseteq X\setminus \overline{A}$. Since $X$ is $\omega$-well-filtered, $Q_{k_{0}}\cap P_{k_{0}}\subseteq X\setminus \overline{A}$ for some $k_{0}\in\omega$. Whence $$a_{k_{0}}\in Q_{k_{0}}\cap P_{k_{0}}\subseteq X\setminus \overline{A},$$ a contradiction. So claim 3 is proved.

Take a $z\in\bigcap\limits_{k\in\omega}(Q_{k}\cap P_{k})\cap \overline{A}$. Then for every $k\in\omega$, $$z\in Q_{k}\cap P_{k}=\up\{a_{n}\mid n\geq k\}\cup(\up x\cap \up y),$$ and $z\in \overline{A}$. If $z\in\up x\cap \up y$, then $z$ is an upper bound of $\{x,y\}$ in $\overline{A}$. If $z\not\in\up x\cap \up y$, then for all $k\in \omega$ there is $k\leq k_{r}$ such that $a_{k_{r}}\leq z$. Since $(a_{\alpha_{n}})_{n\in\omega}$ converges to $x$ and $y$ and $$x\in X\setminus\dn z\in\mathcal{O}(X)\ \mbox{or}\ \ y\in X\setminus\dn z\in\mathcal{O}(X),$$ there are $u\in\omega$ such that $a_{s}\in X\setminus\dn z$ for all $u\leq s$. Particular, $a_{u_{r}}\in X\setminus\dn z$, a contradiction.
Therefore, $\overline{A}$ is directed. Since $X$ is a $d$-space, $\overline{A}$ is a Scott closed set of $\Omega(X)$ and $\bigvee \overline{A}$ exists. Consequently, $\overline{A}=\overline{\{\bigvee \overline{A}\}}$ and hence $X$ is sober.
\end{proof}

\begin{corollary} Let $X$ be a well-filtered coherent space. If $X\times X$ is a Fr\'{e}chet space, then $X$ is sober.
\end{corollary}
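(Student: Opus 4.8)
The plan is to deduce this straight from Theorem 3.9. Coherence and the Fr\'echet property of $X\times X$ are inherited verbatim from the hypothesis, so the only thing to observe is that a well-filtered space is automatically $\omega$-well-filtered and a $d$-space; then $X$ meets all the hypotheses of Theorem 3.9 and the conclusion follows at once.

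That a well-filtered space is $\omega$-well-filtered is immediate from the definitions: a countably filtered family of compact saturated subsets of $X$ is in particular a filtered family, so the well-filtered condition specializes to the $\omega$-well-filtered one.

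That a well-filtered $T_{0}$ space is a $d$-space is the only point worth spelling out, and it is a well-known fact. I would argue as follows. Let $D$ be a directed subset of $\Omega(X)$. Each $\up d$ with $d\in D$ is compact saturated: it is an upper set, and any open cover of $\up d$ has a member $U$ containing $d$, whence $\up d\subseteq U$ because open sets are upper sets for the specialization order. The family $\{\up d\mid d\in D\}$ is filtered, since for $d_{1},d_{2}\in D$ one may pick an upper bound $d_{3}\in D$ and then $\up d_{3}\subseteq\up d_{1}\cap\up d_{2}$. Put $K=\bigcap_{d\in D}\up d$, the set of upper bounds of $D$. If $K\cap\overline{D}=\emptyset$, then $K\subseteq X\setminus\overline{D}\in\mathcal{O}(X)$, and well-filteredness yields some $d\in D$ with $\up d\subseteq X\setminus\overline{D}$, contradicting $d\in\up d\cap\overline{D}$; so there is $z\in K\cap\overline{D}$. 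This $z$ is an upper bound of $D$, and it is the least one: if $w\in K$ is any upper bound and $U$ is an open neighbourhood of $z$, then $z\in\overline{D}$ forces some $d\in D\cap U$, and $w\geq d$ then puts $w\in U$; thus every open neighbourhood of $z$ contains $w$, i.e.\ $z\leq w$. Hence $\bigvee D=z$ exists, and $z\in\overline{D}$ is precisely the convergence condition defining a $d$-space.

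Putting these together, $X$ is an $\omega$-well-filtered coherent $d$-space whose square is a Fr\'echet space, so Theorem 3.9 applies and $X$ is sober. There is no genuine obstacle beyond the well-filtered $\Rightarrow$ $d$-space step recalled above; the substantive work was already carried out in Theorem 3.9.
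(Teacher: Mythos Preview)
Your proposal is correct and follows exactly the approach intended by the paper: the corollary is stated there without proof as an immediate consequence of Theorem~3.9, using the standard facts that well-filtered implies $\omega$-well-filtered and that every well-filtered $T_{0}$ space is a $d$-space. Your write-up merely makes the second of these explicit, which the paper leaves unstated.
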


\begin{remark} {\rm Note that for every first-countable $X$, $X\times X$ is still first-countable. Hence $X\times X$ is a Fr\'{e}chet space.}

\end{remark}

By corollary 3.11 and Remark 3.12, we have the following corollary 3.12.

\begin{corollary} Let $X$ be a  well-filtered coherent space. If $X$ is first-countable, then $X$ is sober.
\end{corollary}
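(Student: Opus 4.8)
The plan is to deduce the statement immediately from Corollary 3.11 together with the elementary fact recorded in Remark 3.12 that first-countability is preserved by finite products. So the argument is short, and the genuine content has already been carried out in Theorem 3.9 and Corollary 3.11; this corollary is essentially packaging.

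Concretely, first I would check that $X\times X$ is first-countable. If $\{U_n\}_{n\in\omega}$ is a countable neighbourhood base at $x\in X$ and $\{V_n\}_{n\in\omega}$ is a countable neighbourhood base at $y\in X$, then $\{U_m\times V_n\mid m,n\in\omega\}$ is a countable family of open neighbourhoods of $(x,y)$ which forms a base at $(x,y)$ for the product topology, since any basic open neighbourhood $U\times V$ of $(x,y)$ contains some $U_m\times V_n$. Hence $X\times X$ is first-countable.

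Next, since every first-countable space is a Fr\'{e}chet space (the fact quoted right after Definition 3.2, see \cite{RE77}), $X\times X$ is a Fr\'{e}chet space. As $X$ is well-filtered and coherent by hypothesis, Corollary 3.11 applies and yields that $X$ is sober, which completes the proof.

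There is no real obstacle here; the only point worth noting is that one could alternatively give a self-contained argument by imitating the sequence extraction of Theorem 3.9 directly from a countable neighbourhood base at $(x,y)$, bypassing the Fr\'{e}chet property altogether, but chaining Corollary 3.11 with Remark 3.12 is the most economical route.
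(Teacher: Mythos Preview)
Your proposal is correct and matches the paper's own approach exactly: the paper derives this corollary directly from Corollary~3.11 together with Remark~3.12, just as you do. No further argument is given there, so your write-up is in fact more detailed than the paper's.
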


It is known that every fist-countable well-filtered space is sober (see \cite{TH62}). So the above Corollary 3,12 is surely obtained.

\begin{proposition}\  Let\ $Y$\ be a\ Fr\'{e}chet\ space. If $X$ is a retract of $Y$, then $X$ is a Fr\'{e}chet space.

\end{proposition}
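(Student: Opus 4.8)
The plan is to transport the problem into $Y$ along the map $f$, solve it there using that $Y$ is Fr\'{e}chet, and then transport the resulting sequence back to $X$ along $g$, exploiting the retraction identity $g\circ f=id_{X}$. Fix a subset $A\subseteq X$ and a point $x\in\overline{A}$, where the closure is taken in $X$; we must produce a sequence in $A$ that converges to $x$.

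First I would look at $f(A)\subseteq Y$. Since $f$ is continuous, $f(\overline{A})\subseteq\overline{f(A)}$ (closure now taken in $Y$), so in particular $f(x)\in\overline{f(A)}$. As $Y$ is a Fr\'{e}chet space, there is a sequence $\{y_{n}\}_{n\in\omega}\subseteq f(A)$ converging to $f(x)$ in $Y$. For each $n\in\omega$ pick $a_{n}\in A$ with $f(a_{n})=y_{n}$; then $\{a_{n}\}_{n\in\omega}$ is a sequence in $A$.

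Next I would push this sequence forward by $g$. Since continuous maps preserve convergence of sequences, $y_{n}\to f(x)$ yields $g(y_{n})\to g(f(x))$ in $X$. But $g(y_{n})=g(f(a_{n}))=a_{n}$ for every $n$, and $g(f(x))=x$, by $g\circ f=id_{X}$. Hence $\{a_{n}\}_{n\in\omega}$ converges to $x$, which is the sequence required; therefore $X$ is a Fr\'{e}chet space.

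There is essentially no serious obstacle in this argument. The only point needing care is the (routine) choice of preimages $a_{n}\in f^{-1}(y_{n})\cap A$, and the retraction identity $g\circ f=id_{X}$ is exactly what guarantees that these particular points, rather than some uncontrolled points of $X$, converge to $x$. Alternatively, one may observe that $f$ is an embedding of $X$ onto the subspace $f(X)$ of $Y$ (with continuous inverse $g|_{f(X)}$) and invoke the fact that the Fr\'{e}chet property is hereditary; the direct argument above is however self-contained and slightly shorter.
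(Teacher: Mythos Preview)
Your proof is correct and follows essentially the same route as the paper: use continuity of $f$ to get $f(x)\in\overline{f(A)}$, apply the Fr\'{e}chet property of $Y$ to obtain a sequence $f(a_{n})\to f(x)$ with $a_{n}\in A$, and then use continuity of $g$ together with $g\circ f=id_{X}$ to conclude $a_{n}\to x$. The only cosmetic difference is that the paper verifies the final convergence by hand via an arbitrary open neighbourhood of $x$, whereas you invoke preservation of sequential limits by continuous maps directly.
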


\begin{proof}
Suppose\ $x\in \overline{A}$, $A\subseteq X$. Since $X$ is a retract of $Y$,\ there are continuous maps $f:X\longrightarrow Y$ and
 $g:Y\longrightarrow X$ such that $g\circ f=id_{X}$. As $f$ is continuous, $\overline{f(\overline{A})}=\overline{f(A)}$. So we have $f(x)\in \overline{f(A)}$. Since $Y$ is a Fr\'{e}chet space,\ there is a sequence $\{x_{n}\}_{n\in\omega}$ contained in $A$ satisfying $\{f(x_{n})\}_{n\in\omega}$\ converging to $f(x)$.\ Take a $W\in\mathcal{O}(X)$ and $x\in W$.\ By the continuity of $g$, $f(x)\in g^{-1}(W)\in\mathcal{O}(Y)$.\ It follows form\ $\{f(x_{n})\}_{n\in\omega}$ converging to $f(x)$ that there is a $m\in\omega$  such that $f(x_{k})\in g^{-1}(W)$ for all $m\leq k$.\ Equivalently, $g(f(x_{k}))=x_{k}\in W$. This means that the sequence $\{x_{n}\}_{n\in\omega}$ converges to $x$. Proving that $X$ is a Fr\'{e}chet space.
 \end{proof}

 \begin{example} {\rm (1) Let $\mathcal{L}=\mathbb{N}\times\mathbb{N}\times(\mathbb{N}\cup\{\infty\})$, where $\mathbb{N}$ is the poset of all positive number with natural order. The partial order $\leq$ on $\mathcal{L}$ is defined by:

(i) $(m,n,1)\leq(m,n,2)\leq\cdot\cdot\cdot\leq(m,n,r)\leq(m,n,r+1)\leq\cdot\cdot\cdot\leq(m,n,\infty)$, for all $m,n,r\in\mathbb{N}$;

(ii) $(m,n,r)\leq(m+1,r+s-1,\infty)$, for all $m,n,s\in\mathbb{N}$.

\begin{center}
\centering
\includegraphics[totalheight=2.1in]{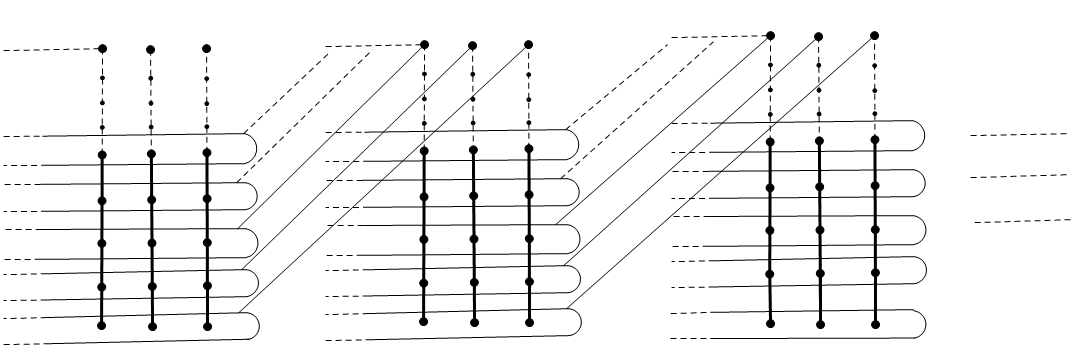}\\ The dcpo $\mathcal{L}$
\end{center}

Then $\Sigma \mathcal{L}$ is well-filtered but not sober(see \cite{E2}). Take $A=\{(2,m,n)\mid m,n\in\mathbb{N}\}$. One can check that $$\overline{A}_{\Sigma \mathcal{L}}=\{(1,m,n)\mid m\in\mathbb{N},n\in\mathbb{N}\cup\{\infty\}\}\cup\{(2,m,n)\mid m\in\mathbb{N},n\in\mathbb{N}\cup\{\infty\}\}.$$
Clearly, $(1,1,\infty)\in \overline{A}$. Suppose the sequence $\{(2,a_{n},b_{n})\}_{n\in\omega}\subseteq A$ converges to $(1,1,\infty)\in \overline{A}$. There are only 2 cases.

Case 1: There exists a $m_{0}\in\mathbb{N}$ such that $\{(2,m_{0},k)\mid k\in\mathbb{N}\}\cap\{(2,a_{n},b_{n})\mid n\in\omega\}$ is infinite.

Consider $W=\up(1,1, m_{0}+1)\cup(\bigcup\limits_{m_{0}+1\leq k}\up(2,k,1))\cup\{(a,b,c)\mid 2\leq a,(a,b,c)\in\mathcal{L}\}$. Then there is no $N\in\mathbb{N}$ satisfying \ $\{(2,a_{n},b_{n})\mid N\leq n\}\subseteq W$, a contradiction.

Case 2: For all $r\in\mathbb{N}$ such that $\{(2,r,k)\mid k\in\mathbb{N}\}\cap\{(2,a_{n},b_{n})\mid n\in\omega\}$ is finite.

In this case, we let $M_{r}$ be the maximal number of the set $\{b_{n}\mid a_{n}=r, n\in\mathbb{N}\}$. Set
$$V=\up(1,1, 1)\cup(\bigcup\limits_{r\in \mathbb{N}}\up(2,r,M_{r}+1))\cup\{(a,b,c)\mid 2\leq a,(a,b,c)\in\mathcal{L}\}.$$ Then we have  $V\cap\{(2,a_{n},b_{n})\mid n\in\omega\}=\emptyset$, impossible.

Therefore, $\Sigma \mathcal{L}$ is not a Fr\'{e}chet space. By Proposition 3.14, the sobrification of $\Sigma \mathcal{L}\times\Sigma\mathcal{L}$ is not a Fr\'{e}chet space.

(2) Let $X=\{0\}\cup\bigcup\limits_{n\in\mathbb{N}}X_{n}$, where $X_{n}=\{\frac{1}{n}\}\cup\{\frac{1}{n}+\frac{1}{n^{2}},\frac{1}{n}+\frac{1}{n^{2}+1},\cdot\cdot\cdot\}$. Equip a topology $\tau$ on $X$ as the following:

all elements having the form $\frac{1}{m}+\frac{1}{n}$ are isolated points;

every $\frac{1}{n}$ has the neighbourhood system $$\{\{\frac{1}{n}\}\cup\{\frac{1}{n}+\frac{1}{k},\frac{1}{n}+\frac{1}{k+1},\cdot\cdot\cdot\}\mid k=n^{2},n^{2}+1,\cdot\cdot\cdot\};$$

the neighbourhood system at 0 is $$\{(\bigcup\limits_{n\in \mathbb{N}\setminus M}X_{n})\setminus F\mid M\in\mathbb{N}^{<\omega}, F\in(\bigcup\limits_{n\in \mathbb{N}\setminus M}X_{n})^{<\omega}, F\subseteq\{\frac{1}{m}+\frac{1}{m^{2}+r}\mid m,r\in\mathbb{N}\}\}.$$

Then $(X,\tau)$ is Hausdorff but not a Fr\'{e}chet space(see \cite{RE77}). Hence $X$ is sober. Unfortunately, by Proposition 3.14, $X\times X$ is still not a Fr\'{e}chet space.

}
\end{example}

By Proposition 3.14, if $X\times X$ is a Fr\'{e}chet space, then $X$ is a Fr\'{e}chet space. So The following question arises naturally.

\begin{question}{\rm Let $X$ be a $\omega$-well-filtered Fr\'{e}chet space. Is $X$ always sober? }
\end{question}

\section{The sobriety of the open set lattice }

In this section, we investigate the Scott sobriety of the open set lattice. Some sufficient conditions are given.

\begin{definition} A topological space $X$ is called a $\omega$ type space if $X$ has a subbase consisting of countable subsets of $X$.

\end{definition}

 Every topological space $X$ with $|X|\leq\omega$ is a $\omega$ type space.

\begin{proposition} {\rm (1) The subspace of every $\omega$-space is a $\omega$ type space.

(2) If $X_{1},X_{2},\cdot\cdot\cdot,X_{n}$ are $\omega$-spaces, then $\prod\limits_{1\leq k \leq n}X_{k}$ is a $\omega$ type space.

}
\end{proposition}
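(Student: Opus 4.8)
The plan is to take ``$\omega$-space'' in the statement to mean the $\omega$ type space of Definition 4.1, i.e.\ a space admitting a subbase all of whose members are countable subsets, and to produce in each case an explicit subbase (indeed a base) of countable open sets. Throughout I will use only two arithmetic facts about countability: a finite intersection of countable sets is countable (it is contained in any one factor), and a finite product of countable sets is countable (its cardinality is at most $\omega^{n}=\omega$). I also note that a subbase automatically covers its underlying space, since its finite intersections form a base and a base covers the whole (open) space; so I may assume each witnessing subbase $\mathcal{S}$ satisfies $\bigcup\mathcal{S}=X$. For part (1), let $\mathcal{S}$ be a subbase of countable subsets of the $\omega$ type space $X$, and give $Y\subseteq X$ the subspace topology. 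First I would check that $\mathcal{S}_{Y}=\{S\cap Y\mid S\in\mathcal{S}\}$ is a subbase for $Y$: a basic open set of $Y$ has the form $(B_{1}\cap\cdots\cap B_{m})\cap Y=(B_{1}\cap Y)\cap\cdots\cap(B_{m}\cap Y)$ with each $B_{i}\in\mathcal{S}$, a finite intersection of members of $\mathcal{S}_{Y}$. Since each $S\cap Y\subseteq S$ is a subset of a countable set, it is a countable subset of $Y$, so $\mathcal{S}_{Y}$ witnesses that $Y$ is an $\omega$ type space. This part is entirely routine.

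For part (2) the main obstacle is exactly the naive attempt, and anticipating it is the key step. The usual product subbase $\{\pi_{k}^{-1}(S)\mid 1\le k\le n,\ S\in\mathcal{S}_{k}\}$ does \emph{not} consist of countable sets, because $\pi_{k}^{-1}(S)=X_{1}\times\cdots\times S\times\cdots\times X_{n}$ drags along the remaining factors, which may well be uncountable (the hypothesis is only that each $X_{k}$ is $\omega$ type, not that it is a countable space). The fix is to descend from cylinders to boxes of small basic sets. Let $\mathcal{S}_{k}$ be a subbase of countable subsets of $X_{k}$, and let $\mathcal{B}_{k}$ be the base of nonempty finite intersections of members of $\mathcal{S}_{k}$; by the intersection fact every member of $\mathcal{B}_{k}$ is again countable, and $\mathcal{B}_{k}$ covers $X_{k}$ since $\mathcal{S}_{k}\subseteq\mathcal{B}_{k}$. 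I then claim that the family
$$\mathcal{B}=\Big\{\,\prod_{k=1}^{n}B_{k}\;\Big|\;B_{k}\in\mathcal{B}_{k}\ (1\le k\le n)\,\Big\}$$
is a base for the finite product topology on $\prod_{1\le k\le n}X_{k}$, and that each box $\prod_{k=1}^{n}B_{k}$, being a finite product of countable sets, is a countable subset of the product.

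The one point to verify with care is that these box products really do form a base of the finite product (rather than merely a subbase): this is standard for \emph{finite} products and uses precisely that each $\mathcal{B}_{k}$ covers $X_{k}$, so that every cylinder $\pi_{k}^{-1}(U)$ is recovered as a union of boxes and no open set is lost in passing from cylinders to boxes. Once this is in hand, $\mathcal{B}$ is a base of countable open sets, and since a base is in particular a subbase, Definition 4.1 is satisfied and $\prod_{1\le k\le n}X_{k}$ is an $\omega$ type space, completing the proof. I expect no difficulty beyond correctly avoiding the cylinder-set trap and invoking the finiteness of the product to keep the boxes countable.
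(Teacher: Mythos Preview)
The paper states this proposition without proof. Your argument is the natural one and is correct: for (1) you trace the countable subbase through the subspace, and for (2) you rightly abandon cylinders in favour of boxes $\prod_{k} B_{k}$ of countable basic opens, which form a base for the finite product and are each countable as a finite product of countable sets. The only step doing real work is your assumption that the witnessing subbase covers $X$; this is essential, since under the looser convention (empty intersection admitted, so a subbase need not cover) part (2) can actually fail --- take $X=\mathbb{R}$ with topology $\{\emptyset,\{0\},\mathbb{R}\}$, where $\{\{0\}\}$ is a subbase of countable sets but the open set $\{0\}\times\mathbb{R}$ in $X\times X$ is uncountable and is not a union of countable open sets. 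So your covering hypothesis is precisely the reading under which the proposition holds, and with it your proof is complete.
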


\begin{definition}{\rm (see \cite{MTBL21})} A topological space $X$ is called a $P$-space if the intersection of countably open sets is open.

\end{definition}

In order to give the first sufficient condition, the following Theorem 4.4 is useful.

\begin{theorem} Let $X$ and $Y$ be a pair of $\omega$ type $P$-spaces. Then $\Sigma \mathcal{O}(X)\times\Sigma \mathcal{O}(Y)=\Sigma \mathcal{O}(X)\times \mathcal{O}(Y)$.

\end{theorem}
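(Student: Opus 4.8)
The plan is to reduce the statement to Theorem 3.5, which says that if $\Sigma P\times\Sigma Q$ is a Fréchet space then $\Sigma(P\times Q)=\Sigma P\times\Sigma Q$. Applying this with $P=\mathcal{O}(X)$ and $Q=\mathcal{O}(Y)$, it suffices to show that $\Sigma\mathcal{O}(X)\times\Sigma\mathcal{O}(Y)$ is a Fréchet space. In fact, since every first-countable space is Fréchet, and since a countable product (indeed a finite product) of first-countable spaces is first-countable, it would be enough to show that for a $\omega$ type $P$-space $X$ the space $\Sigma\mathcal{O}(X)$ is first-countable. So the first and main step is: \emph{exhibit a countable neighbourhood base at each point $U\in\mathcal{O}(X)$ in the Scott topology on $\mathcal{O}(X)$}.

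To build such a base I would use the two hypotheses on $X$. Because $X$ is an $\omega$ type space, fix a countable subbase $\mathcal{S}=\{S_n\mid n\in\omega\}$; then every open set of $X$ is a union of finite intersections of members of $\mathcal{S}$, and the collection $\mathcal{B}$ of all such finite intersections is a countable base. Because $X$ is a $P$-space, the lattice $\mathcal{O}(X)$ is closed under countable intersections, hence $\mathcal{O}(X)$ is a dcpo in which every countable directed set — in particular every increasing sequence — has a supremum equal to its union, and moreover countable \emph{infima} exist. The key point is to understand the way-below relation, or rather a usable surrogate for it: given $V\in\mathcal{O}(X)$, I expect that $W\ll V$ in $\mathcal{O}(X)$ can be detected on the countable base, and that the sets of the form $\{V'\in\mathcal{O}(X)\mid W\subseteq V'\}$ for $W$ ranging over an appropriate countable family (built from $\mathcal{B}$ and the $P$-space structure, e.g. countable intersections of base elements contained in $U$) furnish a countable neighbourhood base at $U$. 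Concretely, for a fixed $U$ I would consider the family $\{W_i\}$ of all members of $\mathcal{B}$ contained in $U$, close it under finite unions, and — using the $P$-space property to take the relevant countable intersections inside directed sups — argue that the up-sets $\up W_i$ in $\mathcal{O}(X)$ that happen to be Scott-open form a base at $U$.

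The main obstacle I anticipate is verifying that a given Scott-open neighbourhood $\mathcal{W}$ of $U$ actually contains one of these candidate basic sets. Equivalently, I must rule out the existence of a directed family $D\subseteq\mathcal{O}(X)$ with $\bigcup D\supseteq U$ (so $\bigvee D\in\mathcal{W}$) such that no member of $D$ lies in $\mathcal{W}$, while simultaneously every ``small'' $W\subseteq U$ built from the base does lie in $\mathcal{W}$ — the tension being between the directedness of $D$ and the countability coming from $\mathcal{S}$. I would handle this by a diagonalization: enumerate $\mathcal{B}\cap\{W\mid W\subseteq U\}$ as $W_0,W_1,\dots$, use $P$-space closure to form $Z_k=\bigcap_{j}Z_k^{(j)}$ where $Z_k^{(j)}$ are chosen from a hypothetical bad directed set to sit below $U$ cofinally, and derive that some single countably-generated open set below $U$ is forced into $\mathcal{W}$, contradicting Scott-openness of $\mathcal{W}$. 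Once first-countability of $\Sigma\mathcal{O}(X)$ and $\Sigma\mathcal{O}(Y)$ is in hand, the product $\Sigma\mathcal{O}(X)\times\Sigma\mathcal{O}(Y)$ is first-countable, hence Fréchet, and Theorem 3.5 applied to $P=\mathcal{O}(X)$, $Q=\mathcal{O}(Y)$ yields $\Sigma(\mathcal{O}(X)\times\mathcal{O}(Y))=\Sigma\mathcal{O}(X)\times\Sigma\mathcal{O}(Y)$, which is exactly the claim.
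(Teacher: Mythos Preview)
Your plan rests on a misreading of Definition~4.1. An $\omega$ type space is one having a subbase whose \emph{members} are countable subsets of $X$; it is \emph{not} assumed that the subbase itself is countable. So when you write ``fix a countable subbase $\mathcal{S}=\{S_n\mid n\in\omega\}$'' and conclude that $\mathcal{B}$ is a countable base, you have assumed something the hypothesis does not give.

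With the correct reading, the first-countability (indeed the Fr\'echet property) of $\Sigma\mathcal{O}(X)$ can fail, so your reduction to Theorem~3.4 cannot go through. For a concrete obstruction, let $X$ be an uncountable discrete space: the singletons form a base of countable (finite) sets, so $X$ is $\omega$ type, and $X$ is trivially a $P$-space. Here $\mathcal{O}(X)=2^X$. In $\Sigma 2^X$ the finite subsets are dense below the top element $X$, yet no sequence $(F_n)$ of finite sets converges to $X$: choose $x\notin\bigcup_n F_n$ and note $F_n\notin\up\{x\}$ for all $n$, while $\up\{x\}$ is a Scott open neighbourhood of $X$. Hence $\Sigma\mathcal{O}(X)$ is not Fr\'echet, let alone first-countable.

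The paper's proof avoids this by working directly. Given a Scott open $\mathcal{U}\subseteq\mathcal{O}(X)\times\mathcal{O}(Y)$ and $(U,V)\in\mathcal{U}$, it uses the $\omega$ type hypothesis to find \emph{countable} opens $A\subseteq U$, $B\subseteq V$ with $(A,B)\in\mathcal{U}$; enumerating $A=\{a_1,a_2,\dots\}$ and $B=\{b_1,b_2,\dots\}$ it sets $\mathcal{A}_n=\{C:\{a_1,\dots,a_n\}\subseteq C\}$, $\mathcal{B}_n=\{D:\{b_1,\dots,b_n\}\subseteq D\}$ and argues by contradiction, using the $P$-space property to form the opens $(\bigcap_{k\ge n}C_k)\cap A$ in a diagonalization. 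The crucial difference from your sketch is that the countable family $\{\mathcal{A}_n\times\mathcal{B}_n\}$ depends on $\mathcal{U}$ as well as on $(U,V)$; it is not a neighbourhood base at $(U,V)$. Your diagonalization idea is in the right spirit, but it must be carried out per Scott open set rather than packaged as first-countability.
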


\begin{proof}
 Obviously, the product topology of $\Sigma \mathcal{O}(X)$ and $\Sigma \mathcal{O}(Y)$ is contained in the Scott topology of $\mathcal{O}(X)\times\Sigma \mathcal{O}(Y)$. Conversely, let $\mathcal{U}$ be an open set of $\Sigma \mathcal{O}(X)\times \mathcal{O}(Y)$ and $(U,V)\in\mathcal{U}$. By assumption, we can enumerate $\mathcal{A}$ and $\mathcal{B}$ as the bases of $X$ and $Y$ such that all elements belonging to $\mathcal{A}$ and $\mathcal{B}$ are countable, respectively. Then $U=\bigcup\mathcal{A}_{1}$ and $V=\bigcup\mathcal{B}_{1}$, for some $\mathcal{A}_{1}\subseteq\mathcal{A}$ and $\mathcal{B}_{1}\subseteq\mathcal{B}$. Since $\mathcal{U}$ is Scott open in $\Sigma \mathcal{O}(X)\times \mathcal{O}(Y)$, $(\bigcup\limits_{1\leq k\leq n}A_{k},\bigcup\limits_{1\leq r\leq m}B_{r})\in\mathcal{U}$ for some $A_{1},A_{2},\cdot\cdot\cdot,A_{n}\in\mathcal{A}_{1}$ and $B_{1},B_{2},\cdot\cdot\cdot,B_{m}\in\mathcal{B}_{1}$. Let $A=\bigcup\limits_{1\leq k\leq n}A_{k}$, and $B=\bigcup\limits_{1\leq r\leq m}B_{r}$. Then $A,B$ are countable with $A\subseteq U$ and $B\subseteq V$. Suppose that $A=\{a_{1},a_{2},\cdot\cdot\cdot,a_{n},\cdot\cdot\cdot\}$ and $B=\{b_{1},b_{2},\cdot\cdot\cdot,b_{n},\cdot\cdot\cdot\}$. For every $n\in \mathbb{N}^{+}$, we take
\begin{center}
$\mathcal{A}_{n}=\{C\in\mathcal{O}(X)\mid\{a_{1},a_{2},\cdot\cdot\cdot,a_{n}\}\subseteq C\},$
\end{center}
\begin{center}
$\mathcal{B}_{n}=\{D\in\mathcal{O}(Y)\mid\{b_{1},b_{2},\cdot\cdot\cdot,b_{n}\}\subseteq D\}.$
\end{center}
Then, for every $n\in \mathbb{N}^{+}$, $\mathcal{A}_{n}$ and $\mathcal{B}_{n}$ are Scott open sets in $\Sigma \mathcal{O}(X)$ and $\Sigma \mathcal{O}(Y)$, respectively. In addition, $U\in\mathcal{A}_{n}$ and $V\in\mathcal{B}_{n}$, for every $n\in \mathbb{N}^{+}$. Assume that $\mathcal{A}_{n}\times\mathcal{B}_{n}\not\subseteq\mathcal{U}$, for every $n\in \mathbb{N}^{+}$. Choose a $(C_{n},D_{n})\in(\mathcal{A}_{n}\times\mathcal{B}_{n})\setminus\mathcal{U}$. For every $n\in \mathbb{N}^{+}$, we take
\begin{center}
$A_{n}=(\bigcap_{k\geq n}C_{k})\cap A \ \ {\rm and} \ \ B_{n}=(\bigcap_{k\geq n}D_{k})\cap B.$
\end{center}
Since $X,Y$ are $P$-spaces, $A_{n}\in\mathcal{O}(X)$ and $B_{n}\in\mathcal{O}(Y)$, for every $n\in \mathbb{N}^{+}$. Clearly, $\bigcup_{n\in \mathbb{N}^{+}}A_{n}\subseteq A$ and  $\bigcup_{n\in \mathbb{N}^{+}}B_{n}\subseteq B$. For every $n\leq k$, $a_{n}\in C_{k}\cap U$ and $b_{n}\in D_{k}\cap U$. This implies that $a_{n}\in A_{n}$ and $b_{n}\in B_{n}$. Consequently, $A=\bigcup_{n\in \mathbb{N}^{+}}A_{n}$ and $B=\bigcup_{n\in \mathbb{N}^{+}}B_{n}$. It follows from $(C_{n},D_{n})\not\in\mathcal{U}$ that $(A_{n},B_{n})\not\in\mathcal{U}$. Since $\mathcal{O}(X)\times \mathcal{O}(Y)\setminus\mathcal{U}$ is Scott closed and $(A,B)$ is a directed supremum of $\{(A_{n},B_{n})\mid n\in\mathbb{N}^{+}\}$, we have $(A, B)\in\mathcal{O}(X)\times \mathcal{O}(Y)\setminus\mathcal{U}$. It is a contradiction. Therefore, there is a $\mathcal{A}_{k}\times \mathcal{B}_{k}\subseteq\mathcal{U}$ and $(U,V)\in\mathcal{A}_{k}\times \mathcal{B}_{k}$. By the arbitrariness of $(U,V)$, $\mathcal{U}$ is open in the product topology $\Sigma \mathcal{O}(X)\times\Sigma \mathcal{O}(Y)$. Proving that $\Sigma \mathcal{O}(X)\times\Sigma \mathcal{O}(Y)=\Sigma \mathcal{O}(X)\times \mathcal{O}(Y)$.
\end{proof}

\begin{lemma} {\rm(see \cite{RE77})} Let $L$ be a dcpo and sup semilattice. If the sup operation is jonitly Soctt-contniuous, then $\Sigma L$ is sober.

\end{lemma}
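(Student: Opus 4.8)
The plan is to unpack sobriety in the usual way: since the Scott topology is always $T_0$, it suffices to show that every irreducible Scott closed subset $A$ of $\Sigma L$ equals $\overline{\{a\}} = \dn a$ for some $a \in L$. The whole argument reduces to one structural claim, namely that such an $A$ is directed. Granting this, the dcpo hypothesis gives that $\bigvee A$ exists; because $A$ is Scott closed it is closed under directed suprema, so $\bigvee A \in A$; and because $A$ is moreover a lower set, $A = \dn \bigvee A = \overline{\{\bigvee A\}}$, which is what we want. So the real content is the directedness step, and that is where the hypothesis on the join operation enters.

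To prove directedness, I read "jointly Scott continuous" as the statement that the join map $\vee \colon \Sigma L \times \Sigma L \to \Sigma L$ is continuous when the domain carries the \emph{product} topology (not merely that it is Scott continuous as a map out of $\Sigma(L\times L)$). Fix $x,y \in A$ and take $x\vee y$ as the candidate upper bound. Suppose toward a contradiction that $x\vee y \notin A$. Then $(x,y)$ lies in the open set $\vee^{-1}(L\setminus A)$ of $\Sigma L \times \Sigma L$, so there are Scott open sets $U \ni x$ and $V\ni y$ with $U\times V \subseteq \vee^{-1}(L\setminus A)$; equivalently $u\vee v \notin A$ for all $u\in U$, $v\in V$. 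But $x\in A\cap U$ and $y\in A\cap V$, so both intersections are nonempty, and irreducibility of $A$ produces a point $z\in A\cap U\cap V$. Then $(z,z)\in U\times V$, hence $z = z\vee z \notin A$, contradicting $z\in A$. Therefore $x\vee y\in A$; being an upper bound of $\{x,y\}$ that lies in $A$, it witnesses that $A$ is directed (and nonempty, since irreducible sets are nonempty), and we are done.

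The main obstacle is precisely this directedness step, and within it the two features that must be combined exactly right: the product-topology formulation of joint continuity, which yields a genuine open rectangle $U\times V$ inside $\vee^{-1}(L\setminus A)$ rather than just a Scott open subset of $\Sigma(L\times L)$; and the irreducibility of $A$, which forces $U$ and $V$ to meet \emph{inside} $A$ at a common point $z$, at which $z\vee z = z$ collapses the rectangle condition into a contradiction. Everything else is routine: the passage from directedness of $A$ to $A = \dn\bigvee A$ uses only that $L$ is a dcpo, that $A$ is Scott closed (hence a lower set closed under directed joins), and the elementary identity $\overline{\{a\}} = \dn a$ in $\Sigma L$.
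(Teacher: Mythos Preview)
Your proof is correct and is exactly the standard argument for this well-known fact. Note, however, that the paper does not supply its own proof of this lemma: it is stated with a citation and used as a black box in the proof of Corollary~4.6, so there is no in-paper argument to compare against. Your write-up matches the classical proof one finds in the domain-theory literature (e.g.\ \cite{GG03}), with the key point being precisely the one you isolate: joint continuity of $\vee$ with respect to the \emph{product} Scott topology yields an open rectangle $U\times V$, and irreducibility of $A$ then produces a diagonal witness $z\in A\cap U\cap V$ for which $z\vee z=z$ forces the contradiction.
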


\begin{corollary} Let $X$ be a $\omega$ type $P$-space. Then $\Sigma \mathcal{O}(X)$ is sober.

\end{corollary}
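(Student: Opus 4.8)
The plan is to deduce this from Lemma 4.5, applied to the complete lattice $L=\mathcal{O}(X)$. Recall that $\mathcal{O}(X)$, ordered by inclusion, is a complete lattice --- hence in particular a dcpo and a sup semilattice --- in which the binary join of $U,V\in\mathcal{O}(X)$ is simply $U\cup V$. By Lemma 4.5 it therefore suffices to show that the join map
$$\vee\colon \mathcal{O}(X)\times\mathcal{O}(X)\longrightarrow \mathcal{O}(X),\qquad (U,V)\longmapsto U\cup V,$$
is jointly Scott-continuous, i.e.\ continuous from $\Sigma\mathcal{O}(X)\times\Sigma\mathcal{O}(X)$ to $\Sigma\mathcal{O}(X)$.

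First I would record the (routine) fact that $\vee$ is Scott-continuous as a map from $\Sigma(\mathcal{O}(X)\times\mathcal{O}(X))$ to $\Sigma\mathcal{O}(X)$. It is obviously monotone, so it preserves the order; and if $\mathcal{D}\subseteq\mathcal{O}(X)\times\mathcal{O}(X)$ is directed with coordinate projections $\mathcal{D}_1,\mathcal{D}_2$, then $\bigvee\mathcal{D}=\bigl(\bigcup\mathcal{D}_1,\bigcup\mathcal{D}_2\bigr)$ and a two-sided inclusion gives
$$\bigcup_{(U,V)\in\mathcal{D}}(U\cup V)=\Bigl(\bigcup\mathcal{D}_1\Bigr)\cup\Bigl(\bigcup\mathcal{D}_2\Bigr)=\vee\Bigl(\bigvee\mathcal{D}\Bigr),$$
so $\vee$ preserves directed suprema and hence is Scott-continuous on the product poset.

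The hypotheses on $X$ enter only at the last step: applying Theorem 4.4 with $Y=X$ (both $\omega$ type $P$-spaces) yields $\Sigma\mathcal{O}(X)\times\Sigma\mathcal{O}(X)=\Sigma(\mathcal{O}(X)\times\mathcal{O}(X))$. Thus the Scott-continuity of $\vee$ established above is exactly its joint Scott-continuity with respect to the product topology, and Lemma 4.5 applied to $L=\mathcal{O}(X)$ gives that $\Sigma\mathcal{O}(X)$ is sober.

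I do not expect any real obstacle here: the steps that $\mathcal{O}(X)$ is a complete lattice and that $\vee$ preserves directed joins are standard, and the substantive content is entirely carried by Theorem 4.4. The one point worth being careful about is that ``jointly Scott-continuous'' in Lemma 4.5 refers to continuity for the \emph{product} of the Scott topologies, which in general is strictly coarser than the Scott topology of $\mathcal{O}(X)\times\mathcal{O}(X)$; it is precisely the coincidence of these two topologies supplied by Theorem 4.4 (which uses both the $\omega$ type and the $P$-space assumptions) that makes the argument go through, and without it the generic Scott-continuity of $\vee$ from step two would not suffice.
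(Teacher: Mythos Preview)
Your proof is correct and follows essentially the same approach as the paper: apply Theorem 4.4 with $Y=X$ to identify $\Sigma\mathcal{O}(X)\times\Sigma\mathcal{O}(X)$ with $\Sigma(\mathcal{O}(X)\times\mathcal{O}(X))$, observe that the sup operation is then jointly Scott-continuous, and invoke Lemma 4.5. The paper's proof is terser and omits the routine verification that $\vee$ preserves directed suprema, but the structure is identical.
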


\begin{proof}
By Theorem 4.4, $\Sigma \mathcal{O}(X)\times\Sigma \mathcal{O}(X)=\Sigma \mathcal{O}(X)\times \mathcal{O}(X)$. Then we have the sup operation of  $\Sigma \mathcal{O}(X)$ is jonitly Soctt-contniuous. Hence, by Lemma 4.5,  $\Sigma \mathcal{O}(X)$ is sober.
\end{proof}

\begin{corollary} Let $X$ be a countable $P$-space. Then $\Sigma \mathcal{O}(X)$ is sober.

\end{corollary}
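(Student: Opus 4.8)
The final statement to prove is Corollary 4.7: if $X$ is a countable $P$-space, then $\Sigma\mathcal{O}(X)$ is sober.

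The plan is to derive this directly from Corollary 4.6, which states that the Scott topology on $\mathcal{O}(X)$ is sober whenever $X$ is an $\omega$ type $P$-space. So the only thing that needs to be checked is that a countable $P$-space is an $\omega$ type $P$-space. Being a $P$-space is hypothesized, so all that remains is to verify the $\omega$ type condition: that $X$ admits a subbase consisting of countable subsets of $X$. But this is immediate from the observation recorded just after Definition 4.1, namely that every topological space $X$ with $|X|\le\omega$ is an $\omega$ type space --- indeed, since $X$ itself is countable, every subset of $X$ (in particular every member of any chosen subbase, e.g. the whole topology $\mathcal{O}(X)$) is countable, so $\mathcal{O}(X)$ serves as a subbase all of whose members are countable.

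\begin{proof}
Since $|X|\le\omega$, every subset of $X$ is countable; in particular the topology $\mathcal{O}(X)$ itself is a subbase (indeed a base) of $X$ all of whose members are countable. Hence $X$ is an $\omega$ type space. As $X$ is moreover a $P$-space by hypothesis, $X$ is an $\omega$ type $P$-space, and Corollary 4.6 applies to give that $\Sigma\mathcal{O}(X)$ is sober.
\end{proof}

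The main (and only) obstacle is essentially bookkeeping: confirming that the countability hypothesis is strong enough to force the $\omega$ type condition, which it plainly is. There is no real mathematical content beyond invoking Corollary 4.6; the corollary is stated precisely so that this specialization is one line. An alternative, self-contained route that avoids even citing Corollary 4.6 would be to rerun the argument directly: a countable $P$-space is trivially an $\omega$ type $P$-space, so Theorem 4.4 gives $\Sigma\mathcal{O}(X)\times\Sigma\mathcal{O}(X)=\Sigma(\mathcal{O}(X)\times\mathcal{O}(X))$, whence the binary sup operation $\cup\colon\mathcal{O}(X)\times\mathcal{O}(X)\to\mathcal{O}(X)$ is jointly Scott continuous, and then Lemma 4.5 yields sobriety of $\Sigma\mathcal{O}(X)$. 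I would present the short version above, since Corollary 4.6 already packages exactly this chain of reasoning.
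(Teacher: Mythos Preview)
Your proposal is correct and matches the paper's approach: Corollary 4.7 is stated immediately after Corollary 4.6 without its own proof, and the paper explicitly notes right after Definition 4.1 that every space $X$ with $|X|\leq\omega$ is an $\omega$ type space, so the intended deduction is exactly the one you give.
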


\begin{theorem} Let $P,Q$ be a poset. If $\Sigma P$ and $\Sigma Q$ are $\omega$ type spaces, then $\Sigma \sigma(P)\times\Sigma \sigma(Q)=\Sigma \sigma(P)\times \sigma(Q)$.

\end{theorem}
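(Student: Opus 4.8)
The plan is to mimic the proof of Theorem~4.4, the only real change being a substitute for the $P$-space hypothesis used there. Since the product topology of $\Sigma\sigma(P)$ and $\Sigma\sigma(Q)$ is always contained in the Scott topology of $\sigma(P)\times\sigma(Q)$, it suffices to take a Scott open set $\mathcal{U}$ of $\sigma(P)\times\sigma(Q)$ and a point $(U,V)\in\mathcal{U}$ and produce a basic product-open neighbourhood of $(U,V)$ inside $\mathcal{U}$. Because $\Sigma P$ and $\Sigma Q$ are $\omega$ type spaces, fix bases of $\Sigma P$ and $\Sigma Q$ all of whose members are countable. Writing $U$ and $V$ as unions from these bases and using that $\mathcal{U}$ is Scott open together with the fact that finite unions of basic opens form a directed family, one gets finite unions $A\subseteq U$ and $B\subseteq V$ (hence \emph{countable} Scott open sets) with $(A,B)\in\mathcal{U}$. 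Enumerate $A=\{a_1,a_2,\dots\}$ and $B=\{b_1,b_2,\dots\}$, and put $\mathcal{A}_n=\{C\in\sigma(P)\mid\{a_1,\dots,a_n\}\subseteq C\}$, $\mathcal{B}_n=\{D\in\sigma(Q)\mid\{b_1,\dots,b_n\}\subseteq D\}$; these are Scott open in $\Sigma\sigma(P)$ and $\Sigma\sigma(Q)$ and contain $U$ and $V$ respectively. It is enough to show $\mathcal{A}_k\times\mathcal{B}_k\subseteq\mathcal{U}$ for some $k$.

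Suppose not, so that for every $n$ one can choose $(C_n,D_n)\in(\mathcal{A}_n\times\mathcal{B}_n)\setminus\mathcal{U}$; moreover, since every point of a Scott open set lies in a countable basic open subset of it, one may replace $C_n$ and $D_n$ by finite unions of basic opens inside $A$ and $B$ (this only shrinks the pair, and the complement of $\mathcal{U}$ is Scott closed, hence a lower set), so we may assume $C_n\subseteq A$ and $D_n\subseteq B$ are countable Scott open sets. In the proof of Theorem~4.4 one now uses $A_n=\bigl(\bigcap_{k\ge n}C_k\bigr)\cap A$, which is open there because $X$ is a $P$-space; here a countable intersection of Scott open subsets of $P$ need not be Scott open, so this step breaks. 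The remedy I propose is to construct the $(C_n,D_n)$ recursively so that the sequence is \emph{increasing} in $\sigma(P)\times\sigma(Q)$. Granting this, $\bigcap_{k\ge n}C_k=C_n$ is already Scott open, the family $\{(C_n,D_n)\}_{n\in\omega}$ is directed, every member lies outside $\mathcal{U}$, and its supremum is $\bigl(\bigcup_nC_n,\bigcup_nD_n\bigr)=(A,B)$, since $a_m\in C_m\subseteq C_n$ for all $n\ge m$ and symmetrically for $B$. As $(\sigma(P)\times\sigma(Q))\setminus\mathcal{U}$ is Scott closed, this forces $(A,B)\notin\mathcal{U}$, contradicting the choice of $A$ and $B$. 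Hence some $\mathcal{A}_k\times\mathcal{B}_k\subseteq\mathcal{U}$, and since $\{a_1,\dots,a_k\}\subseteq A\subseteq U$ and $\{b_1,\dots,b_k\}\subseteq B\subseteq V$ this product-open set is a neighbourhood of $(U,V)$ inside $\mathcal{U}$, as required.

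The step I expect to be the main obstacle is precisely the recursive construction of the increasing sequence of witnesses: given an increasing chain $(C_1,D_1)\le\dots\le(C_n,D_n)$ with $(C_i,D_i)\in(\mathcal{A}_i\times\mathcal{B}_i)\setminus\mathcal{U}$, one must find Scott open sets $C_{n+1}$ and $D_{n+1}$ with $C_n\cup\{a_{n+1}\}\subseteq C_{n+1}\subseteq A$, $D_n\cup\{b_{n+1}\}\subseteq D_{n+1}\subseteq B$ and $(C_{n+1},D_{n+1})\notin\mathcal{U}$. This is not automatic, because the complement of $\mathcal{U}$ is not closed under joins, so a crude amalgamation of $(C_n,D_n)$ with a level-$(n+1)$ witness need not stay outside $\mathcal{U}$. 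I expect this extension step to be forced by another application of the Scott openness of $\mathcal{U}$ to a suitable directed family, using the standing assumption that $\mathcal{A}_m\times\mathcal{B}_m\not\subseteq\mathcal{U}$ for every $m$ and the fact that all the sets in play are countable and determined by finite data; alternatively one can replace the per-stage construction by a maximality argument, picking a $\subseteq$-maximal increasing chain of witnesses inside the relevant cones of $\mathcal{A}_n$ and $\mathcal{B}_n$ and showing that its supremum must already be $(A,B)$. Either way the rest of the argument is exactly as above.
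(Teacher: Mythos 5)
Your setup and reduction coincide with the paper's, and you correctly locate the point where the argument of Theorem~4.4 breaks down here: a countable intersection of Scott open subsets of a poset need not be Scott open. But your proposed repair --- recursively arranging the witnesses $(C_n,D_n)$ into an increasing chain --- is precisely the step you leave open, and it is a genuine gap rather than a routine detail. As you yourself observe, the complement of $\mathcal{U}$ is not closed under joins, so knowing $(C_n,D_n)\notin\mathcal{U}$ and having some witness at level $n+1$ does not produce a Scott open $C_{n+1}$ with $C_n\cup\{a_{n+1}\}\subseteq C_{n+1}\subseteq A$ and $(C_{n+1},D_{n+1})\notin\mathcal{U}$; the natural candidate $C_n\cup(\up a_{n+1}\cap A)$ is not Scott open in general, and neither of your two suggested mechanisms (a further directed-family argument, or a maximal chain) is actually carried out. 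The proof as written is therefore incomplete at its crux.

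The paper closes this gap by a different and sharper observation, which is worth recording: keep the (non-monotone) witnesses $(B_k,C_k)$ exactly as in Theorem~4.4, set $E_n=\bigl(\bigcap_{k\geq n}B_k\bigr)\cap \widetilde U$, where $\widetilde U=\{u_1,u_2,\dots\}\subseteq U$ is the countable Scott open set with $(\widetilde U,\widetilde V)\in\mathcal{A}$, and prove \emph{directly} that $E_n$ is Scott open, with no $P$-space hypothesis. The point is that each $u_s$ lies in $B_k$ for every $k\geq s$, so inside $\widetilde U$ the infinite intersection removes only finitely many constraints at each point. Concretely, if $D$ is directed with $\bigvee D\in E_n$, then $D$ meets $\widetilde U$ in some $u_s$; if $s\leq n$ that element already lies in $E_n$, and if $s>n$ one uses the finite (hence Scott open) intersection $\bigcap_{n\leq k\leq s-1}B_k$, which contains $\bigvee D$, to find $d\in D$ inside it, and then directedness together with the upperness of all sets involved to find $\widetilde d\in D\cap\up d\cap\up u_s\subseteq E_n$. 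The resulting pairs $(E_n,F_n)$ are then exactly the increasing chain of witnesses you were trying to build: each lies below $(B_n,C_n)$, hence outside $\mathcal{U}$, each contains $\{u_1,\dots,u_n\}$ (resp.\ $\{v_1,\dots,v_n\}$), and their union is $(\widetilde U,\widetilde V)$, yielding the contradiction as in your argument. Replacing your unproved recursion with this openness lemma completes the proof.
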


\begin{proof}
It suffices to check that the Scott topology of $\sigma(P)\times \sigma(Q)$ is contained in the product topology. Suppose $\mathcal{A}\in\sigma(\sigma(P)\times \sigma(Q))$ and $(U,V)\in\mathcal{A}$. Since $\Sigma P$ and $\Sigma Q$ are $\omega$ type spaces, there are $\mathcal{U}\subseteq\sigma(P)$ and $\mathcal{V}\subseteq\sigma(Q)$ such that $U=\bigcup\mathcal{U}$ and $V=\bigcup\mathcal{V}$, where every element belonging to $\mathcal{U}$ and $\mathcal{V}$ is countable. By the Scott openness of $\mathcal{A}$, there are $U_{1},U_{2},\cdot\cdot\cdot,U_{n}\in\mathcal{U}$ and $V_{1},V_{2},\cdot\cdot\cdot,V_{m}\in\mathcal{V}$ satisfying $(\widetilde{U},\widetilde{V})\in\mathcal{A}$, where $\widetilde{U}=\bigcup\limits_{1\leq k\leq n}U_{k}$ and $\widetilde{V}=\bigcup\limits_{1\leq r\leq m}V_{r}$. Whence, $\widetilde{U}$ and $\widetilde{V}$ are countable. Let $\widetilde{U}=\{u_{1},u_{2},\cdot\cdot\cdot,u_{n},\cdot\cdot\cdot\}$ and $\widetilde{V}=\{v_{1},v_{2},\cdot\cdot\cdot,v_{n},\cdot\cdot\cdot\}$. For every $n\in\mathbb{N}^{+}$, we set
 \begin{center}
$\mathcal{B}_{n}=\{B\in\sigma(P)\mid\{u_{1},u_{2},\cdot\cdot\cdot,u_{n}\}\subseteq B\},$
\end{center}
\begin{center}
$\mathcal{C}_{n}=\{C\in\sigma(Q)\mid\{v_{1},v_{2},\cdot\cdot\cdot,v_{n}\}\subseteq C\}.$
\end{center}
Clearly, $\widetilde{U}\in \mathcal{B}_{n}\in\sigma(\sigma(P))$ and $\widetilde{V}\in\mathcal{C}_{n}\in\sigma(\sigma(Q))$, for every $n\in\mathbb{N}^{+}$. Assume that $\mathcal{B}_{n}\times\mathcal{C}_{n}\not\subseteq\mathcal{A}$, for all $n\in\mathbb{N}^{+}$. Choose a $(B_{n},C_{n})\in\mathcal{B}_{n}\times\mathcal{C}_{n}\setminus\mathcal{A}$. For every $n\in\mathbb{N}^{+}$, let
\begin{center}
$E_{n}=(\bigcap_{k\geq n}B_{k})\cap \widetilde{U}\ \ {\rm and} \ \ F_{n}=(\bigcap_{k\geq n}C_{k})\cap \widetilde{V}.$
\end{center}
We assert that $E_{n}\in\sigma(P)$ and $F_{n}\in\sigma(Q)$, for all $n\in\mathbb{N}^{+}$. It is no doubt that $E_{n}$ is upper. Let $D\subseteq P$ be a directed subset with $\bigvee D$ existing such that $\bigvee D\in E_{n}$. It follows form $\widetilde{U}\in\sigma(P)$ that $D\cap\widetilde{U}\neq\emptyset$. Choose a $u_{s}\in U$. If $s\leq n$, then $u_{s}\in E_{n}\cap D$. If $s>n$, then $u_{s}\in(\bigcap\limits_{s\leq k}B_{k})\cap \widetilde{U}$. As $\bigvee D\in\bigcap\limits_{n\leq k\leq s-1}B_{k}\in\sigma(P)$, $D\cap(\bigcap\limits_{n\leq k\leq s-1}B_{k})\neq\emptyset$. Take a $d\in D\cap(\bigcap\limits_{n\leq k\leq s-1}B_{k})$. Using the directedness of $D$, we can fix a $\widetilde{d}\in D\cap\up d\cap \up u_{s}$. Since every Scott open set is upper, we have
 $$\widetilde{d}\in D\cap(\bigcap\limits_{n\leq k\leq s-1}B_{k})\cap(\bigcap\limits_{s\leq k}B_{k})\cap \widetilde{U}=D\cap E_{n}.$$ Proving that $E_{n}\in\sigma(P)$. Similarly, we can verify that $F_{n}\in\sigma(Q)$. As $(B_{n},C_{n})\not\in\mathcal{A}$ and $\mathcal{A}\in\sigma(\sigma(P)\times \sigma(Q))$, $(E_{n},F_{n})\not\in\mathcal{A}$. One can check that $\mathcal{E}$ is directed in $\sigma(P)\times \sigma(Q)$ and $$\bigcup\mathcal{E}=(\bigcup\limits_{n\in\mathbb{N}^{+}}E_{n},\bigcup\limits_{n\in\mathbb{N}^{+}}F_{n})=(\widetilde{U},\widetilde{V}),$$
 where $\mathcal{E}=\{(E_{n},F_{n})\mid n\in\mathbb{N}^{+}\}$. According to the Scott closedness of $\sigma(P)\times \sigma(Q)\setminus\mathcal{A}$ and $\mathcal{E}\subseteq\sigma(P)\times \sigma(Q)\setminus\mathcal{A}$, we have $\bigcup\mathcal{E}=(\widetilde{U},\widetilde{V})\in\sigma(P)\times \sigma(Q)\setminus\mathcal{A}$, a contradiction. Hence, there is a $\mathcal{B}_{k_{0}}\times\mathcal{C}_{k_{0}}\subseteq\mathcal{A}$ for some $k_{0}\in\mathbb{N}^{+}$. This means that $(U,V)\in\mathcal{B}_{k_{0}}\times\mathcal{C}_{k_{0}}\subseteq\mathcal{A}$. Therefore, $\mathcal{A}$ is open in the product topology. The proof is complete.
 \end{proof}

\begin{corollary} Let $P$ be a poset. If $\Sigma P$ is a $\omega$ type space, then $\Sigma \sigma(P)$ is sober.

\end{corollary}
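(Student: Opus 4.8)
The plan is to combine Theorem 4.8 with the joint-continuity criterion for Scott sobriety, Lemma 4.5, exactly as in the proof of Corollary 4.6. First I would note that $\sigma(P)$, the lattice of Scott-open subsets of $P$ ordered by inclusion, is a complete lattice (arbitrary unions of Scott-open sets are Scott-open, and it has a bottom $\emptyset$ and top $P$); in particular it is a dcpo and a sup-semilattice, with binary sup given by union $U\vee V=U\cup V$. Thus Lemma 4.5 applies once we know the sup operation $\vee:\sigma(P)\times\sigma(P)\longrightarrow\sigma(P)$ is jointly Scott-continuous, i.e. continuous as a map from the product space $\Sigma\sigma(P)\times\Sigma\sigma(P)$ to $\Sigma\sigma(P)$.

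The key step is to use the hypothesis that $\Sigma P$ is an $\omega$ type space to invoke Theorem 4.8 with $Q=P$: it gives $\Sigma\sigma(P)\times\Sigma\sigma(P)=\Sigma(\sigma(P)\times\sigma(P))$, so the product topology on $\sigma(P)\times\sigma(P)$ coincides with the Scott topology of the product poset. Now the map $\vee:\sigma(P)\times\sigma(P)\longrightarrow\sigma(P)$, $(U,V)\mapsto U\cup V$, is always Scott-continuous when its domain carries the Scott topology of the product poset: it is monotone, and for a directed family $\{(U_i,V_i)\}_{i\in I}$ in $\sigma(P)\times\sigma(P)$ one has $\bigvee_i (U_i,V_i)=(\bigcup_i U_i,\bigcup_i V_i)$ and $(\bigcup_i U_i)\cup(\bigcup_i V_i)=\bigcup_i(U_i\cup V_i)=\bigvee_i(U_i\vee V_i)$, so $\vee$ preserves directed sups. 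Hence $\vee$ is continuous from $\Sigma(\sigma(P)\times\sigma(P))$ to $\Sigma\sigma(P)$, and by the identification from Theorem 4.8 it is continuous from $\Sigma\sigma(P)\times\Sigma\sigma(P)$ to $\Sigma\sigma(P)$, that is, jointly Scott-continuous.

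Applying Lemma 4.5 to the dcpo sup-semilattice $L=\sigma(P)$ then yields that $\Sigma\sigma(P)$ is sober, which is the claim. I do not expect any genuine obstacle here: the only substantive input is Theorem 4.8 (already proved above), and the remaining verifications — that $\sigma(P)$ is a dcpo sup-semilattice under inclusion and that union preserves directed sups computed coordinatewise — are routine lattice-theoretic bookkeeping of the same flavour as in Corollary 4.6. The one point to state carefully is the direction of the topological comparison: without Theorem 4.8 one only has that the product topology is coarser than the Scott topology of the product poset, and joint continuity genuinely needs the reverse inclusion, which is precisely what the $\omega$ type hypothesis buys.
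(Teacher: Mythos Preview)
Your proposal is correct and follows exactly the approach the paper intends: apply Theorem 4.8 with $Q=P$ to obtain $\Sigma\sigma(P)\times\Sigma\sigma(P)=\Sigma(\sigma(P)\times\sigma(P))$, deduce that the sup (union) operation on $\sigma(P)$ is jointly Scott-continuous, and conclude sobriety via Lemma 4.5. This mirrors verbatim the paper's proof of Corollary 4.6, with Theorem 4.8 playing the role of Theorem 4.4.
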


According to proof of Lemma 3.1 in \cite{AVM05}, we have the following Proposition 4.10.

\begin{proposition} Let $P$ be a countable poset. Then $\Sigma \sigma(P)$ is first countable.
\end{proposition}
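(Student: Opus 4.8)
The plan is to produce, for each $U\in\sigma(P)$, an explicit countable neighbourhood base at $U$ in $\Sigma\sigma(P)$; this is exactly first countability. Since $P$ is countable, so is $U$, so fix an enumeration $U=\{u_{1},u_{2},\ldots\}$ (possibly finite, in which case the argument only simplifies). For each $n\geq 1$ set
$$\mathcal{B}_{n}=\{V\in\sigma(P)\mid \{u_{1},\ldots,u_{n}\}\subseteq V\}.$$
Each $\mathcal{B}_{n}$ is an upper set in $(\sigma(P),\subseteq)$, and since joins in $\sigma(P)$ are set unions and $\{u_{1},\ldots,u_{n}\}$ is finite, any directed family of Scott-open sets whose union lies in $\mathcal{B}_{n}$ must already have a member in $\mathcal{B}_{n}$; hence $\mathcal{B}_{n}\in\sigma(\sigma(P))$, and plainly $U\in\mathcal{B}_{n}$. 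I would then show that $\{\mathcal{B}_{n}\mid n\geq 1\}$ is a neighbourhood base at $U$.

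For this, fix $\mathcal{A}\in\sigma(\sigma(P))$ with $U\in\mathcal{A}$ and suppose, for contradiction, that $\mathcal{B}_{n}\not\subseteq\mathcal{A}$ for every $n$; pick $V_{n}\in\mathcal{B}_{n}\setminus\mathcal{A}$, so $\{u_{1},\ldots,u_{n}\}\subseteq V_{n}$ and $V_{n}\notin\mathcal{A}$. The key device — the same one already appearing in the proof of Theorem 4.8 — is to put $E_{n}=\bigl(\bigcap_{k\geq n}V_{k}\bigr)\cap U$ for each $n$. Granting that each $E_{n}$ is Scott-open in $P$, one checks readily that $E_{n}\subseteq E_{n+1}$, that $\bigcup_{n}E_{n}=U$ (since $u_{s}\in V_{k}$ for all $k\geq s$, whence $u_{s}\in E_{s}$), and that $E_{n}\subseteq V_{n}\notin\mathcal{A}$, so $E_{n}\in\sigma(P)\setminus\mathcal{A}$ because the latter set is down-closed. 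Then $\{E_{n}\mid n\geq 1\}$ is a directed subset of the Scott-closed set $\sigma(P)\setminus\mathcal{A}$, so its join $U=\bigcup_{n}E_{n}$ also lies in $\sigma(P)\setminus\mathcal{A}$, contradicting $U\in\mathcal{A}$. Hence some $\mathcal{B}_{n}\subseteq\mathcal{A}$, and since $U\in\mathcal{B}_{n}$ this finishes the claim, and with it the proposition.

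The one step that is not routine — and the place where countability of $P$ is genuinely used — is the claim that $E_{n}\in\sigma(P)$, since $\bigcap_{k\geq n}V_{k}$ is only a \emph{countable} intersection of Scott-open sets and need not be Scott-open by itself. This is precisely the directedness computation carried out in the proof of Theorem 4.8: $E_{n}$ is obviously an upper set, and given a directed $D\subseteq P$ with $\bigvee D\in E_{n}$ one uses $U\in\sigma(P)$ to obtain $u_{s}\in D\cap U$; if $s\leq n$ then $u_{s}\in V_{k}$ for every $k\geq n$, so $u_s\in D\cap E_n$, while if $s>n$ one picks $d\in D\cap\bigcap_{n\leq k\leq s-1}V_{k}$ (a finite intersection, hence Scott-open) and then, by directedness, $\widetilde{d}\in D\cap\up d\cap\up u_{s}$, which lies in $\bigl(\bigcap_{k\geq n}V_{k}\bigr)\cap U=E_{n}$. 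In the write-up I would simply invoke this argument from Theorem 4.8 rather than reproduce it, recording the conceptual point that it is the special shape of the $V_{k}$ (each containing the initial segment $\{u_{1},\ldots,u_{k}\}$) that renders the otherwise problematic countable intersection harmless once it is cut down by $U$.
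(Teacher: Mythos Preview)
Your argument is correct and is precisely the one-factor specialization of the proof of Theorem~4.8, which you rightly identify as the source of the only nontrivial step (Scott-openness of $E_n$). The paper itself gives no independent proof of Proposition~4.10 but simply refers the reader to Lemma~3.1 of \cite{AVM05}; your proposal is thus at least as explicit as, and fully in line with, the method the paper relies on.
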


Using corollary 4.9, we have the following corollary.

\begin{corollary} {\rm( see \cite{AVM05})} Let $P$ be a countable posets. Then $\Sigma \sigma(P)$ is sober.

\end{corollary}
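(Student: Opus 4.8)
The plan is to derive Corollary 4.11 from Corollary 4.9 via Proposition 4.2(1) — the statement that subspaces of $\omega$-spaces are $\omega$ type spaces — together with the observation that countability is inherited by subspaces. First I would note that if $P$ is a countable poset, then the underlying set of $\Sigma P$ is countable, so by the remark immediately following Definition 4.1 (every topological space with $|X|\leq\omega$ is an $\omega$ type space), $\Sigma P$ is itself an $\omega$ type space: one may simply take the subbase to consist of all singletons, or indeed all open sets, each of which is automatically a countable subset of a countable set.

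Once $\Sigma P$ is known to be an $\omega$ type space, I would invoke Corollary 4.9 directly: for any poset $P$ with $\Sigma P$ an $\omega$ type space, $\Sigma\sigma(P)$ is sober. Applying this to our countable $P$ gives that $\Sigma\sigma(P)$ is sober, which is exactly the claim. So the proof is a two-line deduction: (i) $P$ countable $\Rightarrow$ $\Sigma P$ is an $\omega$ type space; (ii) apply Corollary 4.9.

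There is essentially no obstacle here, since all the work has been done in Theorem 4.8 and Corollary 4.9 (and, upstream, in Lemma 4.5 on jointly Scott-continuous sup operations). The only point requiring a moment's care is making explicit why $\Sigma P$ qualifies as an $\omega$ type space when $P$ is countable — but this is immediate from the cited remark after Definition 4.1, since a countable space trivially admits a subbase (e.g. the discrete subbase of singletons, or the family of all open sets) whose members are countable subsets of $X$. One could alternatively phrase the argument through Proposition 4.10: $P$ countable implies $\Sigma\sigma(P)$ is first-countable, hence (being a complete lattice, as $\sigma(P)$ always is) Corollary 3.8 applies and $\Sigma\sigma(P)$ is sober; I would mention this as the route taken in \cite{AVM05}, but the cleaner self-contained argument within the present section is the one through Corollary 4.9.

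\begin{proof}
Since $P$ is countable, the underlying set of $\Sigma P$ is countable, so by the remark following Definition 4.1, $\Sigma P$ is a $\omega$ type space. Applying Corollary 4.9, $\Sigma \sigma(P)$ is sober.
\end{proof}
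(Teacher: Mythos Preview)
Your proof is correct and matches the paper's approach exactly: the paper simply writes ``Using corollary 4.9, we have the following corollary,'' which unpacks to precisely your two-line deduction (countable $\Rightarrow$ $\omega$ type via the remark after Definition~4.1, then apply Corollary~4.9). The mention of Proposition~4.2(1) in your planning is unnecessary and you rightly drop it in the actual proof.
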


\begin{lemma} {\rm Let $L$ be a complete lattice. Then we have

(1) if $\Sigma \Gamma(L)$ is sober, then $\Sigma L$ is sober(see \cite{WOM95});

(2) if $\Sigma \mathcal{Q}(L)$ is sober, then $\Sigma L$ is sober(see \cite{XXZ20}).}
\end{lemma}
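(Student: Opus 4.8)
The statement to prove is Lemma 4.12, which cites two known results: (1) if $\Sigma\Gamma(L)$ is sober, then $\Sigma L$ is sober; (2) if $\Sigma\mathcal{Q}(L)$ is sober, then $\Sigma L$ is sober. Since these are cited from \cite{WOM95} and \cite{XXZ20} respectively, a proof proposal should sketch how one establishes them.

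\textbf{Proof proposal.} For part (1), the plan is to exhibit $\Sigma L$ as a suitable ``quotient'' or retract-like image of $\Sigma\Gamma(L)$ compatible with sobriety. First I would recall that $\Gamma(L)$, the lattice of Scott-closed subsets of $\Sigma L$, is a complete lattice and that the principal-ideal embedding $c\colon L\to\Gamma(L)$, $x\mapsto\,\dn x$, preserves directed sups (indeed all existing sups), hence is Scott continuous as a map $\Sigma L\to\Sigma\Gamma(L)$. In the other direction, the map $s\colon\Gamma(L)\to L$, $A\mapsto\bigvee A$, is well-defined since $L$ is complete, and it preserves directed sups of Scott-closed sets (a directed union of Scott-closed sets has sup equal to the sup of the union), so $s$ is Scott continuous $\Sigma\Gamma(L)\to\Sigma L$; moreover $s\circ c=\mathrm{id}_L$. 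Thus $\Sigma L$ is a retract of $\Sigma\Gamma(L)$ in $\mathbf{Top}$. Since a retract of a sober space is sober, part (1) follows once we know $\Sigma\Gamma(L)$ is sober. The main point to check carefully is that $s$ is genuinely Scott continuous, i.e. that for a directed family $\{A_i\}$ in $\Gamma(L)$ we have $\bigvee\!\big(\overline{\bigcup A_i}\big)=\bigvee_i\bigvee A_i$; this holds because $\bigcup A_i\subseteq\overline{\bigcup A_i}$ gives one inequality and $\overline{\bigcup A_i}\subseteq\,\dn\!\big(\bigvee_i\bigvee A_i\big)$ (a Scott-closed, hence lower, superset) gives the other.

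For part (2), the strategy is analogous but using $\mathcal{Q}(L)$, the poset of nonempty compact saturated subsets of $\Sigma L$ ordered by reverse inclusion, which is a dcpo (directed sups being filtered intersections, nonempty by well-filteredness/sobriety considerations available for $\Sigma L$ when $L$ is a complete lattice, since $\Sigma L$ of a complete lattice is a $d$-space and indeed the relevant compactness structure is inherited). I would use the embedding $x\mapsto\,\up x$ from $\Sigma L$ into $\Sigma\mathcal{Q}(L)$: since $\up x$ is compact saturated (it is the saturation of a point) and $x\mapsto\,\up x$ reverses order appropriately and turns directed sups in $L$ into the filtered intersection $\bigcap_i\up x_i=\,\up(\bigvee x_i)$, this is Scott continuous. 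The retraction goes $Q\mapsto\bigvee(\,\bigcap$-type data$)$; the cleanest choice is $Q\mapsto$ the least element of $Q$ when it exists, but in general one takes $r(Q)=\bigvee\{x\in L\mid \up x\supseteq Q\}$, equivalently the sup of the ``co-generated'' lower part, and checks $r(\up x)=x$ and Scott continuity of $r$. Then again $\Sigma L$ is a retract of $\Sigma\mathcal{Q}(L)$, so sobriety transfers.

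The hard part in both cases is verifying that the candidate retraction is Scott continuous, which amounts to an interchange-of-suprema identity between $L$ and the hyperspace; in part (2) there is the additional subtlety that directed joins in $\mathcal{Q}(L)$ are filtered intersections whose nonemptiness and the behaviour of $r$ on them must be controlled, and this is exactly where completeness of $L$ (together with $\Sigma L$ being a $d$-space, so that the relevant closed/compact sets behave well) is used. Once continuity of $c,s$ (resp. the embedding and $r$) and the identity $s\circ c=\mathrm{id}$ (resp. $r\circ(\up(-))=\mathrm{id}$) are in hand, the conclusion is immediate from the standard fact — also used in Proposition 3.14 above in the Fr\'echet setting — that any retract of a sober space is sober, since irreducible closed sets and their generic points are preserved under continuous retractions.
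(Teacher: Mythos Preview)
The paper does not prove Lemma~4.12; both parts are simply quoted from \cite{WOM95} and \cite{XXZ20}, so there is no in-paper argument to compare against. Your sketch for part~(1) is correct and is the standard route used in the cited source: $s(A)=\bigvee A$ is the \emph{left} adjoint of $c(x)=\dn x$, hence preserves all suprema (in particular directed ones), $c$ preserves directed suprema as you verify, and $s\circ c=\mathrm{id}_L$; thus $\Sigma L$ is a retract of $\Sigma\Gamma(L)$ and inherits sobriety.

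Part~(2), however, has a genuine gap. Your candidate retraction $r(Q)=\bigvee\{x\in L:\up x\supseteq Q\}$ is nothing but $\bigwedge Q$, and this map is the \emph{right} (upper) adjoint of $\eta(x)=\up x$. Right adjoints preserve meets, not directed joins, so Scott continuity of $r$ is precisely the nontrivial point, and you have not verified it. Concretely one must show $\bigwedge\bigl(\bigcap_i Q_i\bigr)\le\bigvee_i\bigwedge Q_i$ for every filtered family $(Q_i)$ in $\mathcal{Q}(L)$; well-filteredness does not deliver this directly, because for a lower bound $b$ of $\bigcap_i Q_i$ the set $\up b$ need not be Scott open, so one cannot push $\up b$ back to some $Q_i$. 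You also justify that $\mathcal{Q}(L)$ is a dcpo by appealing to ``$\Sigma L$ is a $d$-space'', but what is actually needed is that $\Sigma L$ be well-filtered for every complete lattice $L$ --- a nontrivial theorem of Xi and Lawson, not a consequence of monotone convergence. As written, the retract strategy for (2) does not close; the cited reference does establish the implication, but your sketch does not supply the missing continuity argument.
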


\begin{lemma} {\rm (see \cite{AB75})} There is a countable complete lattice $L_{0}$ whose Scott topology is not sober.
\end{lemma}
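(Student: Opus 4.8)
The plan is to recall the explicit construction of \cite{AB75}; since exhibiting such an $L_{0}$ is precisely A. Jung's question answered there, the "proof" is really a guided reconstruction. The starting point is Johnstone's countable dcpo $\mathbb{J}=\mathbb{N}\times(\mathbb{N}\cup\{\infty\})$ with Johnstone's order, for which $\Sigma\mathbb{J}$ is known to be non-sober: $\mathbb{J}$ itself is Scott-closed and irreducible (every nonempty Scott-open subset of $\mathbb{J}$ contains $(m,\infty)$ for all but finitely many $m$, so any two meet), yet $\mathbb{J}$ has no greatest element, hence is not the closure of a point. The difficulty is that $\mathbb{J}$, even after formally adjoining $\bot$ and $\top$, is \emph{not} a complete lattice: for instance the common upper bounds of $(1,1)$ and $(2,1)$ form the antichain $\{(j,\infty):j\ge 2\}$, so $(1,1)\vee(2,1)$ still fails to exist in $\mathbb{J}\cup\{\bot,\top\}$. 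So one replaces $\mathbb{J}$ by a carefully enlarged countable poset $P_{0}$ of Johnstone type in which all the missing finite joins have been supplied by auxiliary elements, chosen so that no new common upper bound is added to the set of maximal "limit" points $(m,\infty)$; then one sets $L_{0}=P_{0}\cup\{\bot,\top\}$.

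The verification then splits into three parts. (i) $|L_{0}|\le\omega$: immediate, as $P_{0}$ is countable by construction. (ii) $L_{0}$ is a complete lattice: since $L_{0}$ has a least element, it suffices to show every directed subset has a join and every pair has a join — indeed these two properties together force every subset $S$ to have a supremum, via the directed family $\{\sup F:F\subseteq S\text{ finite}\}$ — and this is exactly what $P_{0}$ is engineered to provide; arbitrary meets then come for free. (iii) $\Sigma L_{0}$ is not sober: one takes $A$ to be the copy of $\mathbb{J}$ sitting inside $L_{0}$ and checks that $A$ is a lower set closed under existing directed joins (hence Scott-closed), that $A$ is irreducible as a subspace — which follows a fortiori from irreducibility of $\Sigma\mathbb{J}$, the subspace topology on $A$ being no finer than $\sigma(\mathbb{J})$ — and that $\sup A\notin A$, because the only common upper bound in $L_{0}$ of the maximal points $(m,\infty)$ is $\top$. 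Since a Scott-closed lower set containing its own supremum is principal, $A\neq\overline{\{x\}}$ for every $x\in L_{0}$, so $\Sigma L_{0}$ is not sober. One additionally checks that $L_{0}$ is distributive (not needed for the statement as phrased, but recorded in \cite{AB75}).

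The main obstacle is step (ii): the auxiliary elements of $P_{0}$ must be chosen so that every binary join genuinely exists and the poset remains countable, while simultaneously not creating a greatest element, nor any common upper bound, for the set of maximal points that drives the Johnstone phenomenon — for otherwise the irreducible closed set $A$ would acquire a supremum in $A$ and sobriety would be restored. Step (i) is bookkeeping and step (iii) is a routine adaptation of Johnstone's original irreducibility argument once the base poset is in place.
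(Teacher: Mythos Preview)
The paper does not prove this lemma; it is quoted from \cite{AB75} and used only as a black box (to deduce Corollary~4.14). Your proposal therefore goes beyond what the paper offers, sketching a reconstruction of the Miao--Xi--Li--Zhao argument.

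As an outline your sketch is sound. The irreducibility transfer in (iii) is correct: the subspace topology that a Scott-closed sub-dcpo inherits is contained in its intrinsic Scott topology, so irreducibility of $\Sigma\mathbb{J}$ passes to the embedded copy; and a Scott-closed set containing its supremum is indeed principal. The reduction of completeness in (ii) to ``dcpo $+$ binary joins $+$ $\bot,\top$'' is standard. You also correctly locate the real work in the construction of $P_{0}$ itself, and you are candid that you do not carry it out. What remains is precisely the nontrivial combinatorics of \cite{AB75}: one must exhibit $P_{0}$ and verify that the missing finite joins can be supplied countably, without creating a common upper bound for the maximal antichain $\{(m,\infty):m\in\mathbb{N}\}$ below $\top$, and without disturbing the directed suprema already present in $\mathbb{J}$ (so that the copy of $\mathbb{J}$ stays Scott-closed in $L_{0}$). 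The naive ``freely adjoin all binary joins'' procedure is delicate here, since each new join may itself require further joins with old and new elements, and one must control this iteration; the actual construction in \cite{AB75} is a specific combinatorial design rather than an abstract free completion. So your proposal is accurate in spirit and identifies the right skeleton, but --- as you yourself flag --- it is not a self-contained proof without that construction.
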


\begin{corollary} $\Sigma \Gamma(L_{0})$ and $\Sigma \mathcal{Q}(L_{0})$ are not sober.

\end{corollary}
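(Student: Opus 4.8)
The plan is simply to invoke Lemma 4.13 together with the contrapositives of the two implications in Lemma 4.12. Lemma 4.13 furnishes a countable complete lattice $L_{0}$ with $\Sigma L_{0}$ not sober, and since $L_{0}$ is in particular a complete lattice, both parts of Lemma 4.12 apply verbatim with $L=L_{0}$.

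First I would read Lemma 4.12(1) contrapositively: ``$\Sigma \Gamma(L_{0})$ sober $\Rightarrow$ $\Sigma L_{0}$ sober'' becomes ``$\Sigma L_{0}$ not sober $\Rightarrow$ $\Sigma \Gamma(L_{0})$ not sober'', and the hypothesis supplied by Lemma 4.13 is exactly the antecedent of this last implication; hence $\Sigma \Gamma(L_{0})$ is not sober. Next I would do the same with Lemma 4.12(2): ``$\Sigma \mathcal{Q}(L_{0})$ sober $\Rightarrow$ $\Sigma L_{0}$ sober'' contraposes to ``$\Sigma L_{0}$ not sober $\Rightarrow$ $\Sigma \mathcal{Q}(L_{0})$ not sober'', and again Lemma 4.13 provides the antecedent, so $\Sigma \mathcal{Q}(L_{0})$ is not sober. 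That is the entire argument; there is no genuine obstacle, as everything needed is already packaged in the cited lemmas, and the only thing one could want to double-check is that Lemma 4.12 is stated for arbitrary complete lattices (it is), so no hypothesis beyond countability and the failure of Scott sobriety of $L_{0}$ is required.

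It is worth remarking why the statement is of interest although its proof is a one-line deduction. By construction $\Gamma(L_{0})$ and $\mathcal{Q}(L_{0})$ are frames, and (as noted in the introduction, following \cite{XXZ20}) $\mathcal{Q}(L_{0})$ is moreover isomorphic to the open set lattice $\mathcal{O}(X)$ of a $T_{0}$ space $X$; the corollary therefore exhibits a \emph{countable} example showing that the sufficient conditions of Section~4 — being an $\omega$ type $P$-space, or a consonant Wilker space — cannot simply be removed if one wants $\Sigma\mathcal{O}(X)$ to be sober. So in the write-up I would state the two-sentence deduction and then add this remark to connect the corollary back to the theme of the section.
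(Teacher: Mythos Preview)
Your deduction is correct and is exactly the (implicit) argument the paper intends: the corollary is stated without proof immediately after Lemmas~4.12 and~4.13, and the contrapositive step you spell out is all there is to it.  One caution about the closing commentary you propose to add: $\Gamma(L_{0})$, as defined in the paper (Scott-closed sets under inclusion), is a co-frame rather than a frame, and the observation from \cite{XXZ20} that $\mathcal{Q}(L)$ is isomorphic to an open-set lattice is made for Isbell's lattice, not for the countable $L_{0}$ of Lemma~4.13, so that paragraph would need more care before inclusion.
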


Finally, we show that the open set lattice of every consonant  Wilker space is sober for Scott topology.

\begin{definition}{\rm (see \cite{NPE10})} A topological space $X$ is a Wilker space if for two open sets $U_{1},U_{2}$ and a compact set $K$ with $ K\subseteq U_{1}\cup U_{2}$, there exist two compact sets $K_{1}\subseteq U_{1}$ and $K_{2}\subseteq U_{2}$ such that $K \subseteq K_{1}\cup K_{2}$.

\end{definition}

\begin{lemma}{\rm (see \cite{NPE10}) Every $T_{i}$ ($i=2,3,5$) space and every locally compact space is a Wilker space.}

\end{lemma}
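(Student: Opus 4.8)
The plan is to treat the two families of spaces by two different arguments. Fix throughout open sets $U_{1},U_{2}$ and a compact set $K$ with $K\subseteq U_{1}\cup U_{2}$; the task is to produce compact sets $K_{1}\subseteq U_{1}$ and $K_{2}\subseteq U_{2}$ with $K\subseteq K_{1}\cup K_{2}$.

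First I would handle the locally compact case. For each $x\in K$ choose $i(x)\in\{1,2\}$ with $x\in U_{i(x)}$ and, using local compactness, pick an open set $V_{x}$ and a compact set $C_{x}$ with $x\in V_{x}\subseteq C_{x}\subseteq U_{i(x)}$. The family $\{V_{x}\mid x\in K\}$ is an open cover of the compact set $K$, so finitely many $V_{x_{1}},\dots,V_{x_{m}}$ already cover $K$. Put $K_{j}=\bigcup\{C_{x_{k}}\mid 1\leq k\leq m,\ i(x_{k})=j\}$ for $j=1,2$: this is a finite union of compact sets, hence compact, it is contained in $U_{j}$ by construction, and $K\subseteq\bigcup_{k}V_{x_{k}}\subseteq\bigcup_{k}C_{x_{k}}=K_{1}\cup K_{2}$. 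This settles the locally compact case.

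For the separation-axiom cases I would reduce everything to the observation that the subspace $K$ (with the relative topology) is a \emph{normal} space: if $X$ is $T_{2}$ then $K$ is a compact Hausdorff space, hence normal; if $X$ is $T_{3}$ then $K$, being a subspace of a regular space, is regular, and a compact regular space is normal (given disjoint closed $A,B\subseteq K$, separate each point of the compact set $A$ from $B$ by regularity, take a finite subcover, and unite/intersect the resulting neighbourhoods); if $X$ is $T_{5}$ then every subspace of $X$ is normal, in particular $K$. Now set $A:=K\setminus U_{1}$ and $B:=K\setminus U_{2}$; these are closed in $K$, and they are disjoint because $K\subseteq U_{1}\cup U_{2}$. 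Normality of $K$ gives disjoint sets $G_{1},G_{2}$, open in $K$, with $A\subseteq G_{1}$ and $B\subseteq G_{2}$. Put $K_{1}:=K\setminus G_{1}$ and $K_{2}:=K\setminus G_{2}$. These are closed in the compact space $K$, hence compact; from $A\subseteq G_{1}$ we get $K_{1}\subseteq K\setminus A=K\cap U_{1}\subseteq U_{1}$, and symmetrically $K_{2}\subseteq U_{2}$; finally $K_{1}\cup K_{2}=K\setminus(G_{1}\cap G_{2})=K$. Hence $X$ is Wilker in each case. (For $T_{2}$ one may also argue directly in $X$, separating the disjoint compact sets $A$ and $B$ by disjoint open subsets of $X$, but routing through normality of $K$ unifies the three cases.)

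The one genuinely delicate point — the step I would be most careful about — is the claim that the compact subspace $K$ is normal, because this is exactly where the precise reading of the separation axioms enters: it is compactness that upgrades regularity to normality (so the $T_{3}$ case does not really need a separation of points), and it is \emph{hereditary} normality rather than plain normality that is needed in order to pass to the possibly non-closed subspace $K$ (so $T_{4}$ would not obviously suffice, whereas $T_{5}$ does). Everything else is bookkeeping; note in particular that the argument never uses that compact subsets of $X$ are closed in $X$, only closedness relative to $K$, so no hidden separation hypothesis is smuggled in.
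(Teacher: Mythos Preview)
Your proof is correct. The paper does not supply a proof of this lemma at all: it merely quotes the result from Wilker \cite{NPE10}, so there is no argument in the paper to compare against. Both parts of your argument are sound --- the finite-subcover construction in the locally compact case is the standard one, and routing the $T_{2}$, $T_{3}$, $T_{5}$ cases through normality of the compact subspace $K$ is clean and avoids redundant casework. Your closing remarks are also on point: the $T_{3}$ argument needs only regularity (compact\,+\,regular $\Rightarrow$ normal handles the rest), and the $T_{5}$ hypothesis is genuinely what is required to pass normality to the possibly non-closed subspace $K$, which explains why the lemma skips $i=4$.
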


\begin{definition}{\rm (see \cite{MAYU61})} A $T_{0}$ space $X$ is consonant if for every $\mathcal{F}\in\sigma(\mathcal{O}(X))$ and $U\in\mathcal{F}$, there is $Q\in\mathcal{Q}(X)$ such that $U\in \Phi(Q)\subseteq \mathcal{F}$, where $\Phi(Q)=\{U\in\mathcal{O}(X)\mid Q\subseteq U\}$.
\end{definition}

\begin{theorem} Let $X$ be a consonant and Wilker space. Then $\Sigma \mathcal{O}(X)$ is sober.
\end{theorem}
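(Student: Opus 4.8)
The plan is to show that $\Sigma\mathcal{O}(X)$ satisfies the hypotheses of a sobriety criterion for frames by first establishing that the sup operation (i.e. union of opens) is jointly Scott continuous, which by Lemma 4.5 would give sobriety. However, this seems too strong for a general consonant Wilker space, so instead I would argue directly: let $\mathcal{A}\subseteq\mathcal{O}(X)$ be an irreducible closed subset of $\Sigma\mathcal{O}(X)$ and produce its generic point, namely $\bigcup\mathcal{A}$. The key point to verify is that $\bigcup\mathcal{A}\in\mathcal{A}$; since $\mathcal{A}$ is Scott closed it is closed under directed sups, so it suffices to show $\mathcal{A}$ is directed (equivalently, closed under finite unions), and then $\bigcup\mathcal{A}$ is the directed join of $\mathcal{A}$, hence lies in $\mathcal{A}$, and $\overline{\{\bigcup\mathcal{A}\}}=\dn(\bigcup\mathcal{A})\cap\mathcal{A}=\mathcal{A}$.

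So the heart of the matter is: given $U_1,U_2\in\mathcal{A}$, show $U_1\cup U_2\in\mathcal{A}$. Suppose not; then $U_1\cup U_2$ lies in the Scott open set $\mathcal{F}=\mathcal{O}(X)\setminus\mathcal{A}$. Here is where \emph{consonance} enters: there is a compact saturated $Q\in\mathcal{Q}(X)$ with $U_1\cup U_2\in\Phi(Q)\subseteq\mathcal{F}$, i.e. $Q\subseteq U_1\cup U_2$ and every open superset of $Q$ avoids $\mathcal{A}$. Now apply the \emph{Wilker} property to the compact set $Q$ and the cover $U_1\cup U_2$: there are compact $K_1\subseteq U_1$, $K_2\subseteq U_2$ with $Q\subseteq K_1\cup K_2$; replacing $K_i$ by $\up K_i$ we may take them saturated. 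The idea is then to exhibit, for $i=1,2$, an open set $W_i$ with $K_i\subseteq W_i\subseteq U_i$ lying in $\mathcal{A}$ — but irreducibility of $\mathcal{A}$ as a closed set of $\Sigma\mathcal{O}(X)$ must be used to extract such $W_i$ from the fact that $U_i\in\mathcal{A}$. Concretely, consider the Scott open sets $\Phi(K_1)$ and $\Phi(K_2)$; since $U_i\in\mathcal{A}\cap\Phi(K_i)$, these meet $\mathcal{A}$, and I would aim to show $\Phi(K_1)\cup\Phi(K_2)\supseteq\Phi(K_1\cup K_2)\supseteq\Phi(Q)$ at the level of opens containing these compacts, so that an open $O\in\mathcal{A}$ with $O\in\Phi(Q)$ would follow and contradict $\Phi(Q)\subseteq\mathcal{F}$. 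The precise mechanism is: from $U_1,U_2\in\mathcal{A}$ and the decomposition $Q\subseteq K_1\cup K_2$, build $O=(U_1\cap \text{(nbhd of }K_1))\cup(U_2\cap\text{(nbhd of }K_2))$ — but this is exactly $U_1\cup U_2$ again unless we can shrink.

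The cleaner route, which I would pursue, is to use irreducibility twice. Since $\mathcal{A}$ is irreducible and $\Gamma(\Sigma\mathcal{O}(X))$ behaves well, consider the two Scott closed sets $\mathcal{A}_i=\mathcal{A}\cap\dn U_i$ for $i=1,2$ (or more generally $\mathcal{A}\cap \{W : K_i\subseteq W\}^c$-complements); one shows $\mathcal{A}\subseteq \mathcal{A}_1'\cup\mathcal{A}_2'$ for suitably chosen closed sets whose union must, by irreducibility, have $\mathcal{A}$ contained in one of them, forcing a bound. I expect the main obstacle to be packaging the Wilker decomposition $Q\subseteq K_1\cup K_2$ together with consonance into a genuine covering of $\mathcal{A}$ by two Scott closed sets, since the natural candidates $\{W\in\mathcal{O}(X): K_i\not\subseteq W\}$ are Scott closed but their union need not contain all of $\mathcal{A}$. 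The fix should be to work with $\mathcal{F}=\mathcal{O}(X)\setminus\mathcal{A}$ being Scott open, invoke consonance to get $Q$, note $U_1,U_2\notin\Phi(Q)$ (else $U_i\in\mathcal{F}$, contradicting $U_i\in\mathcal{A}$), hence $Q\not\subseteq U_1$ and $Q\not\subseteq U_2$ individually, pick points $q_i\in Q\setminus U_i$, and then the Wilker sets $K_1,K_2$ satisfy $q_2\in K_1$, $q_1\in K_2$ up to relabelling; iterating consonance/Wilker on the smaller compacts $K_1,K_2$ and using that $\mathcal{Q}(X)$ is closed under the relevant operations, one reaches a descending situation contradicting compactness or directly contradicting $\Phi(Q)\subseteq\mathcal{F}$ via $O\in\mathcal{A}\cap\Phi(Q)$. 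I would write the argument so that the Wilker property supplies the finite compact decomposition refining any two-set cover, and consonance converts "$\mathcal{A}$ avoids a Scott-open neighbourhood basis of $U_1\cup U_2$" into "$\mathcal{A}$ avoids some $\Phi(Q)$", and these two facts are incompatible with $U_1,U_2\in\mathcal{A}$.
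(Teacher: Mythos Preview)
Your overall strategy --- showing that an irreducible Scott closed $\mathcal{A}\subseteq\mathcal{O}(X)$ has $\bigcup\mathcal{A}$ as a generic point by verifying $\bigcup\mathcal{A}\in\mathcal{A}$ --- is exactly the paper's approach, and your setup via consonance and the Wilker property is correct. The gap is in the finishing move after you obtain compacts $K_{1}\subseteq U_{1}$, $K_{2}\subseteq U_{2}$ with $Q\subseteq K_{1}\cup K_{2}$. You write ``$\Phi(K_{1})\cup\Phi(K_{2})\supseteq\Phi(K_{1}\cup K_{2})$'' and then abandon the line; the correct identity is the \emph{intersection} $\Phi(K_{1})\cap\Phi(K_{2})=\Phi(K_{1}\cup K_{2})$. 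Since each $\Phi(K_{i})$ is Scott open and meets $\mathcal{A}$ (witnessed by $U_{i}$), irreducibility of $\mathcal{A}$ in its open-set formulation gives $\mathcal{A}\cap\Phi(K_{1})\cap\Phi(K_{2})\neq\emptyset$. Any $W$ in this intersection satisfies $K_{1}\cup K_{2}\subseteq W$, hence $Q\subseteq W$, so $W\in\mathcal{A}\cap\Phi(Q)$ --- contradicting $\Phi(Q)\subseteq\mathcal{O}(X)\setminus\mathcal{A}$. That one line replaces all of your alternative attempts (coverings by Scott closed sets, picking points $q_{i}\in Q\setminus U_{i}$, iterating), none of which are needed.

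For comparison, the paper avoids your intermediate reduction to closure under binary unions: it assumes directly that $\bigcup\mathcal{A}\notin\mathcal{A}$, applies consonance to get $K$ with $\bigcup\mathcal{A}\in\Phi(K)\subseteq\mathcal{O}(X)\setminus\mathcal{A}$, uses compactness of $K$ to extract finitely many $U_{1},\dots,U_{n}\in\mathcal{A}$ covering $K$, and then applies Wilker and irreducibility exactly as above (with $n$ sets instead of two). Your binary-union route is a legitimate variant, but the direct version is shorter and sidesteps the need to argue separately that directedness of $\mathcal{A}$ implies $\bigcup\mathcal{A}\in\mathcal{A}$.
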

\begin{proof}
 Let $\mathcal{A}$ be a Scott irreducible closed subset in $\Sigma \mathcal{O}(X)$. Suppose that $\bigcup\mathcal{A}\not\in\mathcal{A}$. As $X$ is a consonant space and $\bigcup\mathcal{A}\in\mathcal{O}(X)\setminus\mathcal{A}\in\sigma(\mathcal{O}(X))$, there is a compact subset $K$ such that $\bigcup\mathcal{A}\in\Phi(K)\subseteq\mathcal{O}(X)\setminus\mathcal{A}$. It follows from $\bigcup\mathcal{A}\in\Phi(K)$ that there are $U_{1},U_{2},\cdot\cdot\cdot,U_{n}\in\mathcal{A}$ satisfying $K\subseteq U_{1}\cup U_{2}\cup\cdot\cdot\cdot\cup U_{n}$. Since $X$ is a Wliker space, there are compact sets $K_{i}\subseteq U_{i}$ ($1\leq i\leq n$) such that $K\subseteq K_{1}\cup K_{2}\cup\cdot\cdot\cdot\cup K_{n}$. Now, we have $\mathcal{A}\cap \Phi(K_{i})\neq\emptyset$, for every $1\leq i\leq n$. By the irreducibility of $\mathcal{A}$, $\mathcal{A}\cap (\bigcap\limits_{1\leq i\leq n}\Phi(K_{i}))\neq\emptyset$. Choose a $W\in\mathcal{A}\cap (\bigcap\limits_{1\leq i\leq n}\Phi(K_{i}))\neq\emptyset$. Then $W\in\mathcal{A}$ and $K_{1}\cup K_{2}\cup\cdot\cdot\cdot\cup K_{n}\subseteq W$. This implies that $W\in\Phi(K)\cap\mathcal{A}$, a contradiction. Hence, we know $\bigcup\mathcal{A}\in\mathcal{A}$. As  $\mathcal{A}$ is lower, we have $\mathcal{A}=\dn_{\mathcal{O}(X)}\bigcup\mathcal{A}=cl(\{\bigcup\mathcal{A}\})$. Proving that $\Sigma \mathcal{O}(X)$ is sober.
 \end{proof}

\begin{lemma}{\rm(see \cite{C1})} Every Qusai-Polish space is a consonant and Wliker space.

\end{lemma}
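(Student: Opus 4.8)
The plan is to prove the two properties separately and to reduce the general quasi-Polish case to the locally compact one via the domain-theoretic representation of quasi-Polish spaces. By de Brecht's theorem (in the form given by the LCS-complete representation of de Brecht--Goubault-Larrecq), a $T_{0}$ space $X$ is quasi-Polish if and only if it is second countable and homeomorphic to a $G_{\delta}$-subspace $Y=\bigcap_{n\in\omega}U_{n}$ (each $U_{n}$ open) of a locally compact sober space $L$, equivalently of a countably based continuous domain with its Scott topology. First I would settle the base case: every locally compact sober space $L$ is both Wilker and consonant. The Wilker property is exactly Lemma 4.16. For consonance, given $\mathcal{F}\in\sigma(\mathcal{O}(L))$ and $W\in\mathcal{F}$, local compactness gives $W=\bigcup\{\mathrm{int}(C)\mid C\in\mathcal{Q}(L),\ C\subseteq W\}$, and this union is directed because $C_{1}\cup C_{2}\in\mathcal{Q}(L)$ whenever $C_{1},C_{2}\in\mathcal{Q}(L)$. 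Scott openness of $\mathcal{F}$ then yields a single $C\in\mathcal{Q}(L)$ with $C\subseteq W$ and $\mathrm{int}(C)\in\mathcal{F}$; since $\mathcal{F}$ is an upper set and $\mathrm{int}(C)\subseteq C$, we get $W\in\Phi(C)\subseteq\Phi(\mathrm{int}(C))\subseteq\mathcal{F}$, so $L$ is consonant.

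The core of the argument is to transport both properties from $L$ to the $G_{\delta}$-subspace $Y\cong X$, where the presentation $Y=\bigcap_{n}U_{n}$ plays the role of \v{C}ech-completeness. For consonance I would adapt the Dolecki--Greco--Lechicki technique (\v{C}ech-complete spaces are consonant) to the non-Hausdorff, domain-theoretic setting. Concretely, for $\mathcal{F}\in\sigma(\mathcal{O}(Y))$ and $W_{0}\in\mathcal{F}$, I would build, using local compactness of $L$ at each stage together with the Scott openness of $\mathcal{F}$, a decreasing sequence $(C_{n})_{n\in\omega}$ of compact saturated subsets of $L$ whose interiors remain inside $\mathcal{F}$ after restriction to $Y$ and which are successively forced into the defining open sets $U_{n}$. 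Well-filteredness of $L$ (local compactness plus sobriety) then guarantees that $Q:=\bigcap_{n}C_{n}$ is a nonempty compact saturated subset of $L$ contained in $\bigcap_{n}U_{n}=Y$, hence $Q\in\mathcal{Q}(Y)$, and the construction is arranged so that $W_{0}\in\Phi(Q)\subseteq\mathcal{F}$. This establishes consonance of $Y$, hence of $X$.

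For the Wilker property of $Y$, suppose $K\subseteq Y$ is compact and $W_{1}=Y\cap V_{1}$, $W_{2}=Y\cap V_{2}$ with $K\subseteq W_{1}\cup W_{2}$. Then $K$ is compact in $L$ and $K\subseteq V_{1}\cup V_{2}$, so Lemma 4.16 produces compact saturated $D_{1}\subseteq V_{1}$, $D_{2}\subseteq V_{2}$ in $L$ with $K\subseteq D_{1}\cup D_{2}$. Since the definition of a Wilker space does \emph{not} require the two covering compacta to be contained in $K$, it suffices to replace $D_{1},D_{2}$ by compact subsets $Q_{1}\subseteq W_{1}$, $Q_{2}\subseteq W_{2}$ of $Y$ with $K\subseteq Q_{1}\cup Q_{2}$; that is, to push each $D_{i}$ down into $Y$ while still covering the part of $K$ it was responsible for. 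Here again the presentation $Y=\bigcap_{n}U_{n}$ and the completeness of $L$ enter: by local compactness I interpolate, for each $n$, compact saturated sets $C^{(i)}_{n}\subseteq V_{i}\cap U_{0}\cap\dots\cap U_{n}$ retaining the relevant portion of $K$, and well-filteredness extracts $Q_{i}:=\bigcap_{n}C^{(i)}_{n}\in\mathcal{Q}(Y)$ with $Q_{i}\subseteq W_{i}$ and $K\subseteq Q_{1}\cup Q_{2}$.

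The hard part will be precisely this transfer step, and especially the bookkeeping in the Wilker case. The subtlety is that $Y$ is in general neither locally compact, nor Hausdorff, nor coherent, so the naive candidates $K\cap D_{i}$ or $K\cap V_{i}$ fail to be compact (an intersection of a compact set with a merely saturated compact set need not be compact absent coherence); one genuinely needs the $G_{\delta}$/Smyth-completeness of the ambient $L$ to force the shrinking compact saturated sets to have nonempty intersection lying inside $Y$ while keeping $K$ covered throughout. Once the completeness lemma is in place --- a non-Hausdorff analogue of the statement that a $G_{\delta}$ subspace of a locally compact sober space is Choquet-complete --- both consonance and the Wilker property follow uniformly. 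An alternative route to consonance, bypassing the representation, is to invoke de Brecht's characterization of quasi-Polish spaces through the strong Choquet game and to read off the condition $\Phi(Q)\subseteq\mathcal{F}$ directly from a winning strategy; I would fall back on this if the explicit shrinking construction becomes unwieldy. Feeding the resulting consonant Wilker space $X$ into Theorem 4.18 then recovers, as a by-product, the sobriety of $\Sigma\mathcal{O}(X)$ for every quasi-Polish $X$.
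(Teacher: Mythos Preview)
The paper does not prove this lemma at all; it is stated with a bare citation to de Brecht--Kawai \cite{C1} and used as a black box to feed Theorem~4.18. There is therefore no ``paper's own proof'' to compare your proposal against.

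On the substance of your sketch: the consonance half is the standard non-Hausdorff adaptation of the Dolecki--Greco--Lechicki argument and is fine as outlined. The Wilker half, which you correctly identify as the delicate point, has a genuine gap in the form written. Your plan is to produce decreasing sequences $C_n^{(i)}\subseteq V_i\cap U_0\cap\dots\cap U_n$ and set $Q_i=\bigcap_n C_n^{(i)}$, but applying the Wilker property of $L$ afresh at each stage gives compacta unrelated to the previous ones, so no decreasing sequence appears; and even if it did, $K\subseteq D_1^{(n)}\cup D_2^{(n)}$ for every $n$ does not by itself yield $K\subseteq\bigl(\bigcap_n D_1^{(n)}\bigr)\cup\bigl(\bigcap_n D_2^{(n)}\bigr)$.

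Both issues are repairable. In a locally compact space the Wilker splitting can be strengthened: given $K\subseteq U_1\cup U_2$, cover $K$ by finitely many compact neighbourhoods, each contained in some $U_i$, and group them; this yields compact $D_1\subseteq U_1$, $D_2\subseteq U_2$ with $K\subseteq\mathrm{int}(D_1)\cup\mathrm{int}(D_2)$. Iterating with $U_i$ replaced by $\mathrm{int}(D_i^{(n)})\cap U_{n+1}$ then gives genuinely decreasing sequences $D_i^{(n+1)}\subseteq D_i^{(n)}$. For the union, observe that if $x\in K$ lies in $D_1^{(n)}$ for infinitely many $n$ then, by monotonicity, it lies in all of them, hence in $Q_1$; since $x$ lies in $D_1^{(n)}\cup D_2^{(n)}$ for every $n$, one of the two alternatives must occur infinitely often. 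With these two refinements your argument goes through.
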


\begin{corollary} {\rm (see \cite{C1})} Let $X$ be a Qusai-Polish space. Then $\Sigma \mathcal{O}(X)$ is sober.
\end{corollary}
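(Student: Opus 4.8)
The plan is to show that a Scott irreducible closed set $\mathcal{A}$ in $\Sigma\mathcal{O}(X)$ is already the closure of a point, and the natural candidate for that point is $\bigcup\mathcal{A}$, since $\mathcal{A}$ is lower (downward closed in the inclusion order) so that $cl(\{\bigcup\mathcal{A}\})=\dn_{\mathcal{O}(X)}\bigcup\mathcal{A}\supseteq\mathcal{A}$ automatically, and conversely $\mathcal{A}\subseteq cl(\{\bigcup\mathcal{A}\})$ will follow at once once we know $\bigcup\mathcal{A}\in\mathcal{A}$. So the whole problem reduces to proving $\bigcup\mathcal{A}\in\mathcal{A}$. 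I would argue by contradiction, assuming $\bigcup\mathcal{A}\notin\mathcal{A}$, i.e.\ $\bigcup\mathcal{A}\in\mathcal{O}(X)\setminus\mathcal{A}$, which is a Scott \emph{open} set of $\mathcal{O}(X)$.

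Here consonance enters: since $X$ is consonant and $\bigcup\mathcal{A}$ lies in the Scott open family $\mathcal{O}(X)\setminus\mathcal{A}$, there is a compact set $K$ with $\bigcup\mathcal{A}\in\Phi(K)\subseteq\mathcal{O}(X)\setminus\mathcal{A}$. Because $K$ is compact and $K\subseteq\bigcup\mathcal{A}=\bigcup_{U\in\mathcal{A}}U$, finitely many members $U_1,\dots,U_n\in\mathcal{A}$ already cover $K$, so $K\subseteq U_1\cup\cdots\cup U_n$. Now the Wilker property is applied: repeatedly splitting $K$ against the cover $U_1\cup(U_2\cup\cdots\cup U_n)$ and then iterating, one obtains compact sets $K_i\subseteq U_i$ for $1\le i\le n$ with $K\subseteq K_1\cup\cdots\cup K_n$. (For $n=2$ this is the definition directly; for larger $n$ one inducts, first writing $K\subseteq K_1\cup K'$ with $K_1\subseteq U_1$ and $K'\subseteq U_2\cup\cdots\cup U_n$ compact, then splitting $K'$.)

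The final step exploits irreducibility of $\mathcal{A}$. For each $i$, the set $\Phi(K_i)=\{V\in\mathcal{O}(X)\mid K_i\subseteq V\}$ is Scott open in $\mathcal{O}(X)$, and it meets $\mathcal{A}$ because $U_i\in\mathcal{A}\cap\Phi(K_i)$. Since $\mathcal{A}$ is irreducible closed, it is not contained in the union of the finitely many proper closed subsets $\mathcal{A}\cap(\mathcal{O}(X)\setminus\Phi(K_i))$, so $\mathcal{A}\cap\bigcap_{i=1}^n\Phi(K_i)\neq\emptyset$. Pick $W$ in this intersection: then $W\in\mathcal{A}$ and $K=K_1\cup\cdots\cup K_n\subseteq W$, i.e.\ $W\in\Phi(K)$. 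But $\Phi(K)\subseteq\mathcal{O}(X)\setminus\mathcal{A}$, contradicting $W\in\mathcal{A}$. Hence $\bigcup\mathcal{A}\in\mathcal{A}$, so $\mathcal{A}=\dn_{\mathcal{O}(X)}\bigcup\mathcal{A}=cl(\{\bigcup\mathcal{A}\})$, and $\Sigma\mathcal{O}(X)$ is sober.

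The only genuinely delicate point is justifying the $n$-fold use of the Wilker property from its two-set formulation; everything else (irreducibility forcing a nonempty finite intersection with Scott open sets meeting $\mathcal{A}$, compactness extracting a finite subcover, the lower-set description of closures in $\Sigma\mathcal{O}(X)$) is routine. I would spell out the induction on $n$ carefully, as that is where an unwary argument could slip.
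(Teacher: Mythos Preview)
Your argument is correct and is essentially the paper's own proof of Theorem~4.18, which is the general statement for consonant Wilker spaces; the corollary about quasi-Polish spaces is then deduced in the paper simply by invoking Lemma~4.19 (every quasi-Polish space is consonant and Wilker) together with Theorem~4.18, rather than by rerunning the argument. The one step you leave implicit is precisely that bridge: you start using consonance and the Wilker property of $X$ without ever saying why a quasi-Polish space enjoys them, so you should add a sentence citing that fact (this is the content of Lemma~4.19, due to de~Brecht--Kawai). Apart from that, your proof and the paper's coincide line for line, including the extraction of a finite subcover from $\mathcal{A}$, the $n$-fold Wilker splitting (which the paper also uses without spelling out the induction), and the use of irreducibility to find $W\in\mathcal{A}\cap\bigcap_i\Phi(K_i)$.
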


\section{Sober + $T_{1}\neq$ Hausdorff }

As we all know, every Hausdorff space is always sober and $T_{1}$. In this section, we will present a countable sober $T_{1}$ and non-Hausdorff space.

\begin{example}

{\rm Let $\mathbb{N}^{+}$ be the set of all positive natural numbers. Given a infinite subset $E\subseteq\mathbb{N}^{+}$, we let $E=\{e_{n}\mid n=1,2,3,\cdot\cdot\cdot\}$ such that the sequence $\{e_{n}\}$ is strictly increasing. The operators $\mathbf{K}$ and $\mathbf{H}$ send the infinite $E$ to $\mathbf{K}(E)=\{e_{2n+1}\mid n=0,1,2,3,...,\}$ and $\mathbf{H}(E)=\{e_{2n}\mid n=1,2,3,...,\}$, respectively. Take $A=\{2n\mid n=1,2,3,...\}$, $M=\{2n+1\mid n=0,1,2,...\}$, $S_{0}=\{4n+2\mid n=1,2,3,...\}$, $S=\{2,4\}\cup S_{0}$, and $T=\{4n\mid n=2,3,4,...\}$. We set the collection
\begin{center}$\begin{array}{lll}
\mathcal{A}&=&\{A,B,S,T\}\cup\{F\mid F\subseteq \mathbb{N}^{+} \mbox{is finite}\}\\
&&\cup\{\mathbf{R}_{1}\circ\mathbf{R}_{2}\circ\cdot\cdot\cdot\circ\mathbf{R}_{n}(S_{0})\cup\{2,4\}\mid \mathbf{R}_{k}\in\{\mathbf{K},\mathbf{H}\}, 1\leq k\leq n,n=1,2,3,...\}\\
&&\cup\{\mathbf{R}_{1}\circ\mathbf{R}_{2}\circ\cdot\cdot\cdot\circ\mathbf{R}_{n}(T)\mid \mathbf{R}_{k}\in\{\mathbf{K},\mathbf{H}\}, 1\leq k\leq n, n=1,2,3,...\}\\
&&\cup\{\mathbf{R}_{1}\circ\mathbf{R}_{2}\circ\cdot\cdot\cdot\circ\mathbf{R}_{n}(M)\mid \mathbf{R}_{k}\in\{\mathbf{K},\mathbf{H}\}, 1\leq k\leq n, n=1,2,3,...\}.\\
\end{array}$\end{center}
Then the topology on $\mathbb{N}^{+}$ whose closed sets are generated by $\mathcal{A}$ is denoted by $\tau$. Now, we assert that $(\mathbb{N}^{+},\tau)$ is sober $T_{1}$ but not Hausdorff.

$\mathbf{Claim}$ 1: $(\mathbb{N}^{+},\tau)$ is $T_{1}$ but not Hausdorff.

Clearly, $(\mathbb{N}^{+},\tau)$ is $T_{1}$ since all finite subsets are closed. Note that the topology base of $\tau$ is $$\{\mathbb{N}^{+}\setminus(C_{1}\cup C_{2}\cup\cdot\cdot\cdot\cup C_{n})\mid C_{1}, C_{2},\cdot\cdot\cdot, C_{n}\in\mathcal{A}\}.$$

Suppose $2\in\mathbb{N}^{+}\setminus(A_{1}\cup A_{2}\cup\cdot\cdot\cdot\cup A_{n})$ and $4\in\mathbb{N}^{+}\setminus(B_{1}\cup B_{2}\cup\cdot\cdot\cdot\cup B_{m})$, for some $A_{1}, A_{2},\cdot\cdot\cdot,A_{n}\in\mathcal{A}$ and $B_{1}, B_{2},\cdot\cdot\cdot, B_{m} \in\mathcal{A}$. Assume that $A_{1}\cup A_{2}\cup\cdot\cdot\cdot\cup A_{n}\cup B_{1}\cup B_{2}\cup\cdot\cdot\cdot\cup B_{m}=\mathbb{N}^{+}$. Then we have $4\in (A_{1}\cup A_{2}\cup\cdot\cdot\cdot\cup A_{n})\setminus(B_{1}\cup B_{2}\cup\cdot\cdot\cdot\cup B_{m})$ and $2\in (B_{1}\cup B_{2}\cup\cdot\cdot\cdot\cup B_{m})\setminus(A_{1}\cup A_{2}\cup\cdot\cdot\cdot\cup A_{n})$. Let $\mathcal{A}_{1}=\{A_{k}\mid 1\leq k\leq n, 4\in A_{k}\}$, $\mathcal{A}_{2}=\{B_{r}\mid 1\leq r\leq m, 2\in B_{r}\}$, $\mathcal{A}_{3}=\{A_{k}\mid 1\leq k\leq n, \{2,4\}\cap A_{k}=\emptyset\}$ and $\mathcal{A}_{4}=\{B_{r}\mid 1\leq r\leq m, \{2,4\}\cap B_{r}=\emptyset\}$. Then we have the following statements:

(1) $\mathcal{A}_{1}\subseteq\{F\mid F\subseteq \mathbb{N}^{+} \mbox{is finite}\}$ and $\mathcal{A}_{2}\subseteq\{F\mid F\subseteq \mathbb{N}^{+} \mbox{is finite}\}$;

(2) $\bigcup\mathcal{A}_{3}\cup \bigcup\mathcal{A}_{4}\subseteq M\cup T$.

These two statements imply $A_{1}\cup A_{2}\cup\cdot\cdot\cdot\cup A_{n}\cup B_{1}\cup B_{2}\cup\cdot\cdot\cdot\cup B_{m}\neq\mathbb{N}^{+}$, a contradiction. Hence $(\mathbb{N}^{+}\setminus(A_{1}\cup A_{2}\cup\cdot\cdot\cdot\cup A_{n}))\cap(\mathbb{N}^{+}\setminus(B_{1}\cup B_{2}\cup\cdot\cdot\cdot\cup B_{m}))\neq\emptyset$, showing that $(\mathbb{N}^{+},\tau)$ not Hausdorff.

$\mathbf{Claim}$ 2: $(\mathbb{N}^{+},\tau)$ is sober.

$\mathbf{Conclusion}$ 1. every infinite subset $B\subseteq S$ is not irreducible. 

Assume that $B$ is irreducible. We consider the following cases. 

Case 1: $S_{0}\subseteq B$.

\setlength{\baselineskip}{1.3\baselineskip}

Then we have $B\subseteq (\{2,4\}\cup\mathbf{K}(S_{0}))\cup (\{2,4\}\cup\mathbf{H}(S_{0}))$ but neither $B\subseteq (\{2,4\}\cup\mathbf{K}(S_{0}))$
nor $B\subseteq (\{2,4\}\cup\mathbf{H}(S_{0}))$, a contradiction. 

Case 2: $S_{0}\not\subseteq B$.

(i) $\mathbf{K}(S_{0})\cap B\in 2^{\mathbf{K}(S_{0})}\setminus\{{\emptyset,\mathbf{K}(S_{0})\}}$ and $\mathbf{H}(S_{0})\subseteq B$.

In this case, $B\subseteq (\mathbf{K}(S_{0})\cup\{2,4\})\cup (\mathbf{H}(S_{0}))\cup\{2,4\})$. However, $B\not\subseteq \mathbf{K}(S_{0})\cup\{2,4\}$  and  $B\not\subseteq \mathbf{H}(S_{0}))\cup\{2,4\}$, a contradiction.

(ii) $\mathbf{H}(S_{0})\cap B\in 2^{\mathbf{H}(S_{0})}\setminus\{{\emptyset,\mathbf{H}(S_{0})\}}$ and $\mathbf{K}(S_{0})\subseteq B$.

By (i), it is still impossible.

(iii) $\mathbf{H}(S_{0})\cap B\in 2^{\mathbf{H}(S_{0})}\setminus\{{\emptyset,\mathbf{H}(S_{0})\}}$ and $\mathbf{K}(S_{0})\cap B\in 2^{\mathbf{K}(S_{0})}\setminus\{{\emptyset,\mathbf{K}(S_{0})\}}$.

In this case, $B\subseteq (\mathbf{K}(S_{0})\cup\{2,4\})\cup (\mathbf{H}(S_{0}))\cup\{2,4\})$. Unfortunately, $B\not\subseteq \mathbf{K}(S_{0})\cup\{2,4\}$  and  $B\not\subseteq \mathbf{H}(S_{0}))\cup\{2,4\}$, a contradiction.
(iv) $B\subseteq \mathbf{K}(S_{0})\cup\{2,4\}$ or $B\subseteq \mathbf{H}(S_{0})\cup\{2,4\}$.

Without loss of generality, we just consider the situation $B\subseteq \mathbf{K}(S_{0})\cup\{2,4\}$. One can check that $\mathbf{K}(S_{0})\cup\{2,4\}$ is not irreducible.  Hence we just consider case (iv-1)-(iv-4).

(iv-1)  $(\mathbf{K}(\mathbf{K}(S_{0}))\cup\{2,4\})\cap B\in 2^{(\mathbf{K}(\mathbf{K}(S_{0}))\cup\{2,4\}}\setminus\{{\emptyset,\mathbf{K}(\mathbf{K}(S_{0}))\cup\{2,4\}\}}$ and $\mathbf{H}(\mathbf{K}(S_{0}))\subseteq B$.

Then $B\subseteq(\mathbf{K}(\mathbf{K}(S_{0}))\cup\{2,4\})\cup(\mathbf{H}(\mathbf{K}(S_{0}))\cup\{2,4\})$
and $B\not\subseteq D$, for all $$D\in\{\mathbf{K}(\mathbf{K}(S_{0}))\cup\{2,4\},\mathbf{H}(\mathbf{K}(S_{0})))\cup\{2,4\}\},$$ a contradiction.

(iv-2) $(\mathbf{H}(\mathbf{K}(S_{0}))\cup\{2,4\})\cap B\in 2^{(\mathbf{H}(\mathbf{K}(S_{0}))\cup\{2,4\}}\setminus\{{\emptyset,\mathbf{H}(\mathbf{K}(S_{0}))\cup\{2,4\}\}}$ and $\mathbf{K}(\mathbf{K}(S_{0}))\subseteq B$.

By (iv-1), it is still impossible.

(iv-3) $(\mathbf{H}(\mathbf{K}(S_{0}))\cup\{2,4\})\cap B\in 2^{(\mathbf{H}(\mathbf{K}(S_{0}))\cup\{2,4\}}\setminus\{{\emptyset,\mathbf{H}(\mathbf{K}(S_{0}))\cup\{2,4\}\}}$ and\\ $(\mathbf{K}(\mathbf{K}(S_{0}))\cup\{2,4\})\cap B\in 2^{(\mathbf{K}(\mathbf{K}(S_{0}))\cup\{2,4\}}\setminus\{{\emptyset,\mathbf{K}(\mathbf{K}(S_{0}))\cup\{2,4\}\}}$.

In this case, 
$B\subseteq(\mathbf{K}(\mathbf{K}(S_{0}))\cup\{2,4\})\cup(\mathbf{H}(\mathbf{K}(S_{0}))\cup\{2,4\})$. However, $B\not\subseteq E$, for all $$E\in\{\mathbf{K}(\mathbf{K}(S_{0}))\cup\{2,4\}),\mathbf{H}(\mathbf{K}(S_{0})\cup\{2,4\}\}.$$

(iv-4) $B\subseteq(\mathbf{H}(\mathbf{K}(S_{0}))\cup\{2,4\})$ or $B\subseteq(\mathbf{K}(\mathbf{K}(S_{0}))\cup\{2,4\})$

Without loss of generality, we suppose $B\subseteq(\mathbf{H}(\mathbf{K}(S_{0}))\cup\{2,4\})$. One can check that $\mathbf{H}(\mathbf{K}(S_{0})\cup\{2,4\}$ is not irreducible.
 
Repeat this process, there are a sequence \ $\{\mathbf{R}_{n}\}_{n\in\mathbb{N}^{+}}$ such that $B\subseteq \mathbf{R}_{1}\circ\mathbf{R}_{2}\circ\cdot\cdot\cdot\circ\mathbf{R}_{m}(S_{0})\cup\{2,4\}$, for all positive natural number $m$, where $\{\mathbf{R}_{n}\mid n\in\mathbb{N}^{+}\}\subseteq\{\mathbf{K},\mathbf{H}\}$. This implies $$B\subseteq\bigcap\limits_{k\in\mathbb{N}^{+}}(\mathbf{R}_{1}\circ\mathbf{R}_{2}\circ\cdot\cdot\cdot\circ\mathbf{R}_{k}(S_{0})\cup\{2,4\})=\{2,4\}.$$ It contradicts with $B$ a infinite subset. Therefore, conclusion 1 is proved.
\ \

$\mathbf{Conclusion}$ 2. every infinite subset $G\subseteq M$ is not irreducible.

Assume that $G$ is irreducible. 

Case (1): $G=M$.

Then $G=\mathbf{K}(M)\cup\mathbf{H}(M)$ but $G\not\subseteq\mathbf{K}(M),G\not\subseteq\mathbf{H}(M)$, a contradiction.

Case (2): $G\neq M$.
 
(2.1) $\mathbf{H}(M)\subseteq G$ and $\mathbf{K}(M)\cap G\in 2^{\mathbf{K}(M)}\setminus\{\emptyset,\mathbf{K}(M)\}$.

Now, we know $B\subseteq \mathbf{K}(M)\cup \mathbf{H}(M)$ and $B\not\subseteq F$, for all $F\in\{ \mathbf{K}(M), \mathbf{H}(M))\}$.

(2.2) $\mathbf{K}(M)\subseteq G$ and $\mathbf{H}(M)\cap G\in 2^{\mathbf{H}(M)}\setminus\{\emptyset,\mathbf{H}(M)\}$.

This situation is impossible by referring (2-1).

(2.3) $\mathbf{K}(M)\cap G\in 2^{\mathbf{K}(M)}\setminus\{\emptyset,\mathbf{K}(M)\}$. and $\mathbf{H}(M)\cap G\in 2^{\mathbf{H}(M)}\setminus\{\emptyset,\mathbf{H}(M)\}$.

Then $B\subseteq\mathbf{K}(M)\cup\mathbf{H}(M))$ but $B\not\subseteq I$, for all $I\in\{\mathbf{K}(M),\mathbf{H}(M)\}.$

(2.4) $G\subseteq \mathbf{H}(M)$ or $G\subseteq \mathbf{K}(M)$.

For convenience, we suppose $G\subseteq \mathbf{H}(M)$. Clearly, $\mathbf{H}(M)$ is not irreducible. Therefore, we just consider (2.4.1)-(2.4.4).

(2.4.1) $\mathbf{H}(\mathbf{H}(M))\subseteq G$ and $\mathbf{K}(\mathbf{H}(M))\cap G\in 2^{\mathbf{K}(\mathbf{H}(M))}\setminus\{\emptyset,\mathbf{K}(\mathbf{H}(M))\}$.

In this case, $G\subseteq\mathbf{K}(\mathbf{H}(M))\cup\mathbf{H}(\mathbf{H}(M))$ but $G\not\subseteq J$, for all\\
 $$J\in\{\mathbf{K}(\mathbf{H}(M)),\mathbf{H}(\mathbf{H}(M))\}.$$

(2.4.2) $\mathbf{K}(\mathbf{H}(M))\subseteq G$ and $\mathbf{H}(\mathbf{H}(M))\cap G\in 2^{\mathbf{H}(\mathbf{H}(M))}\setminus\{\emptyset,\mathbf{H}(\mathbf{H}(M))\}$.

This case is also impossible by using the method in (2.4.1).

(2.4.3) $\mathbf{K}(\mathbf{H}(M))\cap G\in 2^{\mathbf{K}(\mathbf{H}(M))}\setminus\{\emptyset,\mathbf{K}(\mathbf{H}(M))\}$ and $\mathbf{H}(\mathbf{H}(M))\cap G\in 2^{\mathbf{H}(\mathbf{H}(M))}\setminus\{\emptyset,\mathbf{H}(\mathbf{H}(M))\}$.

It is not difficult to find that $G\subseteq\mathbf{K}(\mathbf{H}(M))\cup\mathbf{H}(\mathbf{H}(M))$. Regrettably, $G\not\subseteq L$, for all
$$L\in\{\mathbf{K}(\mathbf{H}(M)),\mathbf{H}(\mathbf{H}(M))\}.$$

(2.4.4) $G\subseteq \mathbf{K}(\mathbf{H}(M))$ or $G\subseteq \mathbf{H}(\mathbf{H}(M))$.

We suppose $G\subseteq \mathbf{K}(\mathbf{H}(M))$.  

Repeat this process, there are a sequence \ $\{\mathbf{S}_{n}\}_{n\in\mathbb{N}^{+}}$ satistying $B\subseteq \mathbf{S}_{1}\circ\mathbf{S}_{2}\circ\cdot\cdot\cdot\circ\mathbf{S}_{m}(M)$, for all positive natural number $m$, where $\{\mathbf{S}_{n}\mid n\in\mathbb{N}^{+}\}\subseteq\{\mathbf{K},\mathbf{H}\}$. So we have $$B\subseteq\bigcap\limits_{k\in\mathbb{N}^{+}}(\mathbf{S}_{1}\circ\mathbf{S}_{2}\circ\cdot\cdot\cdot\circ\mathbf{S}_{k}(M)=\emptyset.$$ It contradicts with $G$ a infinite subset. Thus, we proved conclusion 2.

$\mathbf{Conclusion}$ 3. every infinite subset $H\subseteq T$ is not irreducible.

The proof is similar to conclusion 2.

$\mathbf{Conclusion}$ 4. every irreducible subset in $(\mathbb{N}^{+},\tau)$ is finite.

Now, let $F\subseteq\mathbb{N}^{+}$ be a infinite subset.
Take $F_{1}=F\cap S$, $F_{2}=F\cap B$ and $F_{3}=F\cap T$. Then one of $F_{1}$, $F_{1}$,$F_{3}$ is infinite.

(a) $F_{1}$ is infinite.

By conclusion 1, $F_{1}$ is not irreducible. Then there are $W_{1},W_{2}\in\mathcal{A}_{1}$ such that $F_{1}\subseteq W_{1}\cup W_{2}$ and $F_{1}\not\subseteq W_{1}$,$F_{1}\not\subseteq W_{2}$, where $$\mathcal{A}_{1}=\{\mathbf{R}_{1}\circ\mathbf{R}_{2}\circ\cdot\cdot\cdot\circ\mathbf{R}_{n}(S_{0})\cup\{2,4\}\mid \mathbf{R}_{k}\in\{\mathbf{K},\mathbf{H}\}, 1\leq k\leq n,n=1,2,3,...\}.$$

Consider $U=W_{1}\cup M\cup T$ and $V=W_{2}\cup M\cup T$. Clearly, $U,V$ are closed in $(\mathbb{N}^{+},\tau)$ and $F=F_{1}\cup F_{2}\cup F_{3}\subseteq U\cup V$. Note that $F\subseteq U$($F\subseteq V$) implies $F_{1}\subseteq W_{1}$($F_{1}\subseteq W_{2}$). So $F\not\subseteq U$ and $F\not\subseteq V$. In this case, $F$ is not irreducible.

(b) $F_{2}$ is infinite.

Using conclusion 2, $F_{2}$ can not be an irreducible subset. This means that we can enumerate  $X_{1},X_{2}\in\mathcal{A}_{2}$ such that $F_{2}\subseteq X_{1}\cup X_{2}$ and $F_{2}\not\subseteq X_{1}$,$F_{2}\not\subseteq X_{2}$, where $$\mathcal{A}_{2}=\{\mathbf{R}_{1}\circ\mathbf{R}_{2}\circ\cdot\cdot\cdot\circ\mathbf{R}_{n}(M)\mid \mathbf{R}_{k}\in\{\mathbf{K},\mathbf{H}\}, 1\leq k\leq n, n=1,2,3,...\}.$$

Take $Y=S\cup X_{1}\cup T$ and $Z=S\cup X_{2}\cup T$. Then $Y,Z$ are closed in $(\mathbb{N}^{+},\tau)$ and $F=F_{1}\cup F_{2}\cup F_{3}\subseteq Y\cup Z$. Furthermore, $F\subseteq Y$($F\subseteq Z$) implies $F_{2}\subseteq X_{1}$ ($F_{2}\subseteq X_{2}$). Whence, $F\not\subseteq U$ and $F\not\subseteq V$. Hence, $F$ is still not irreducible.

(C) $F_{3}$ is infinite.

By the proof of case (b), $F$ fails to be an irreducible subset.

Consequently, all irreducible subsets in $(\mathbb{N}^{+},\tau)$ must be finite. As $(\mathbb{N}^{+},\tau)$ is $T_{1}$, every irreducible subset in $(\mathbb{N}^{+},\tau)$ is a singleton set. Showing that $(\mathbb{N}^{+},\tau)$ is sober.}

\end{example}

\section*{References}

\end{document}